\newtheorem{theorem}{Theorem}[section]
\newtheorem{lemma}{Lemma}[section]
\newtheorem{proposition}[theorem]{Proposition}
\newtheorem{definition}{Definition}[section]
\newtheorem{remark}{Remark}
\newtheorem{convention}{Convention}
\newcommand{\bbm}{\begin{bmatrix}}
	\newcommand{\ebm}{\end{bmatrix}}
\begin{document}

	\title{Equivariant Index Theorem on $\mathbb{R}^n$ in the Context of Continuous Fields of $C^*$-algebras}
	
	\author[Baiying Ren]{Baiying Ren}
	\address[B. Ren]{\normalfont{Research Center for Operator Algebras, School of Mathematical Sciences\\
			East China Normal University\\ 3663 North Zhongshan Rd, Shanghai 200062, P.R.China.}}
	\email{52275500020@stu.ecnu.edu.cn}
	
	\author{Hang Wang}
	
	\address[H. Wang]{\normalfont{Research Center for Operator Algebras, School of Mathematical Sciences\\
			East China Normal University\\ 3663 North Zhongshan Rd, Shanghai 200062, P.R.China.}}
	\email{wanghang@math.ecnu.edu.cn}
	
	\author{Zijing Wang}
	
	\address[Z. Wang]{\normalfont{Research Center for Operator Algebras, School of Mathematical Sciences\\
			East China Normal University\\ 3663 North Zhongshan Rd, Shanghai 200062, P.R.China.}}
	\email{52205500014@stu.ecnu.edu.cn}
	
	\let\thefootnote\relax
	\footnotetext{MSC2020: Primary 58B34, Secondary 19K56, 58J20, 46L80, 19D55.}
	\keywords{Equivariant index theorem, continuous fields of $C^*$-algebras, cyclic cohomology, Bott-Dirac operator.}

	\date{}
	
	\maketitle
	
	\begin{abstract}
		We prove an equivariant index theorem on the Euclidean space using a continuous field of $C^*$-algebras. This generalizes the work of Elliott, Natsume and Nest, which is a special case of the algebraic index theorem by Nest-Tsygan. Using our formula, the equivariant index of the Bott-Dirac operator on $\mathbb{R}^{2n}$ can be explicitly calculated.
	\end{abstract}
    
    \section{Introduction}
    
    In \cite{Nest,Nest2}, Nest and Tsygan proved the algebraic index theorem which is parallel to the  Atiyah-Singer index theorem but in a purely algebraic context. They studied the class of elliptic pseudodifferential operators on compact smooth manifolds in \cite{Nest} and studied families of such operators in \cite{Nest2}. In \cite{ENN2}, Elliott, Natsume and Nest studied elliptic pseudodifferential operators on $\mathbb{R}^n$ of positive order in the sense of Shubin \cite{Sh}, which is a particular but an essential example of the algebraic index theorem. The general idea behind the proof is to regard the elements of $C_0(T^*\mathbb{R}^n)$ as the classical limit of elements of $\mathcal{K}\left(L^2(\mathbb{R}^n)\right)$, the algebra of compact operators on $L^2(\mathbb{R}^n)$, using the technique of continuous fields of $C^*$-algebras. In the following, let us recall the work of \cite{ENN2}.

    Let $P_a:\mathcal{S}(\mathbb{R}^n;E) \mapsto \mathcal{S}(\mathbb{R}^n;F)$ be a pseudodifferential operator (cf. Section \ref{s5}, abbreviated $\Psi$DO) of symbol $a$. 
    Here $E \to {\mathbb{R}^{n}}$ and $F \to {\mathbb{R}^{n}}$ are complex vector bundles and $\mathcal{S}(\mathbb{R}^n;E)$ denotes the space of rapidly decreasing sections of $E$, similarly for $\mathcal{S}(\mathbb{R}^n;F)$.
    Since $\mathbb{R}^{n}$ is contractible, $E$ and $F$ are trival bundles. Thus, there exist complex vector spaces $V,W$ such that
    $E\cong \mathbb{R}^{n}\times V$, 
    $F\cong \mathbb{R}^{n}\times W$.
    Then the operator can be written as  
    $P_a: \mathcal{S}(\mathbb{R}^{n};V) \to \mathcal{S}(\mathbb{R}^{n};W)$ with symbol $a\in C^{\infty}(T^*{\mathbb{R}}^n; \mathrm{End}(V,W))$, where $\mathcal{S}(\mathbb{R}^{n};V)$ denotes  the space of rapidly decreasing $V$-valued functions while $\mathcal{S}(\mathbb{R}^{n};W)$ is similar. 
    Moreover, if $P_a$ is elliptic (cf. Definition \ref{def4}), then $P_a: L^2(\mathbb{R}^{n};V) \to L^2(\mathbb{R}^{n};W)$ in the sense of Remark \ref{rmk1} is a Fredholm operator. 
    Denote by $\mathrm{ind}(P_a)$ the index of the Fredholm operator $P_a$. 
    By Remark \ref{rmk1}, the formal adjoint $P^*_a$ of $P_a$ is also elliptic, and the index is given by 
    \begin{align*}
    	\mathrm{ind}(P_a)
    	=\mathrm{dim}(\mathrm{Ker}P_a|_{L^2(\mathbb{R}^{n};V)})-\mathrm{dim}(\mathrm{Ker}P^*_a|_{L^2(\mathbb{R}^{n};W)}). 
    \end{align*}

    Elliott et al. obtained the following theorem in \cite{ENN2} describing the analytic index $\mathrm{ind}(P_a)$ in terms of $a$. The graph projection 
    $$e_a={\left(\begin{matrix}
    		(1+a^*a)^{-1} & (1+a^*a)^{-1}a^*\\
    		a(1+a^*a)^{-1} & a(1+a^*a)^{-1}a^*\\
    	\end{matrix}
    	\right)}$$ 
    induced by the closed multiplication operator $a: L^2(T^*{\mathbb{R}}^n; V) \to L^2(T^*{\mathbb{R}}^n; W)$ is an essential ingredient of the index formula:

    \begin{theorem}[\cite{ENN2}; Section 3]
    	\label{th1}
    	Suppose that $P_a$ is an elliptic pseudodifferential operator associtated to a symbol $a$ of positive order (cf. Definition \ref{def5}) on $\mathbb{R}^n$, then
    	\begin{align*}
    		\mathrm{ind}(P_a)= \frac{1}{(2\pi i)^nn!}\int_{T^*\mathbb{R}^n}^{} \mathrm{tr}(\hat{e_a}(d\hat{e_a})^{2n}),
    	\end{align*}
    	where $T^*\mathbb{R}^n$ is oriented by $dx_1\wedge d{\xi}_1 \wedge \dots \wedge dx_n\wedge d{\xi}_n $, $\hat{e_a}=e_a-\left(\begin{matrix}
    		0 & 0\\
    		0 & 1\\
    	\end{matrix}
    	\right)$ and the convergence of the integral on the right hand side follows from the fact that $a$ has positive order.
    \end{theorem}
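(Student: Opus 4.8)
The plan is to realize the analytic index $\mathrm{ind}(P_a)$ as the image of a $K$-theory class under a pairing with cyclic cohomology, and to compute that pairing by deforming through a continuous field of $C^*$-algebras interpolating between the "quantum" algebra $\mathcal{K}(L^2(\mathbb{R}^n))$ and the "classical" algebra $C_0(T^*\mathbb{R}^n)$. Concretely, I would first observe that since $P_a$ has positive order, its symbol $a$ grows at infinity, so the graph projection $e_a$ — more precisely $\hat e_a = e_a - \left(\begin{smallmatrix} 0&0\\0&1\end{smallmatrix}\right)$ — is an idempotent with entries vanishing at infinity on $T^*\mathbb{R}^n$, hence defines a class $[\hat e_a] \in K_0(C_0(T^*\mathbb{R}^n))$. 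The right-hand side of the formula is, up to the normalizing constant $\frac{1}{(2\pi i)^n n!}$, exactly the pairing of $[\hat e_a]$ with the top-degree cyclic cocycle on $C_c^\infty(T^*\mathbb{R}^n)$ given by integration of the graded trace against the symplectic volume form; this is the image under the Connes character map of the fundamental class of $T^*\mathbb{R}^n$. So the statement to prove reduces to: the analytic index of $P_a$ equals this cyclic pairing.

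**Next**, I would set up the deformation. Following Elliott–Natsume–Nest, one builds a continuous field $(A_t)_{t\in[0,1]}$ of $C^*$-algebras over $[0,1]$ with $A_0 = C_0(T^*\mathbb{R}^n)\otimes\mathcal{K}(V)$ (the classical limit, where multiplication operators and differentiation both become commutative) and $A_t \cong \mathcal{K}(L^2(\mathbb{R}^n)\otimes V)$ for $t>0$, the isomorphism being implemented by a rescaled Weyl/pseudodifferential quantization in which Planck's constant is $t$. The elliptic symbol $a$ yields a continuous section $t\mapsto e_{a,t}$ of projections (using the $\Psi$DO calculus with parameter $t$ to make sense of $(1+a^*a)^{-1}$ etc. at each level, and the fact that ellipticity makes these operators have the right continuity and limiting behavior), so one gets a class in $K_0$ of the field algebra. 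Evaluation at $t=0$ recovers $[\hat e_a]\in K_0(C_0(T^*\mathbb{R}^n))$, while evaluation at $t=1$ recovers the index class: $[e_{a,1}] - [\left(\begin{smallmatrix} 0&0\\0&1\end{smallmatrix}\right)] \in K_0(\mathcal{K}) = \mathbb{Z}$ is precisely $\mathrm{ind}(P_a)$, because the graph projection construction applied to a Fredholm operator computes its index. Since $K_0$ of a continuous field over a contractible base is constant under evaluation (the evaluation maps are isomorphisms on $K$-theory here, using the structure of the field), the two $K$-theory classes are identified.

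**It remains** to track the cyclic cocycle through the deformation and identify the constant. On the $t=1$ side, the relevant cyclic $0$-cocycle on $\mathcal{K}(L^2(\mathbb{R}^n)\otimes V)$ is the operator trace, and its pairing with a projection difference is exactly the Fredholm index — giving the left-hand side. On the $t=0$ side, I would invoke the key computation (essentially the local index / Weyl-quantization asymptotic, or equivalently the statement that the operator trace degenerates in the classical limit to $\frac{1}{(2\pi i)^n n!}$ times the cyclic cocycle built from the symplectic form on $T^*\mathbb{R}^n$) to identify the pairing with the right-hand side of the theorem. The continuous field furnishes a cyclic cocycle on a dense subalgebra of the whole field whose restrictions to the two endpoints are the two cocycles above, and whose pairing with the (constant) $K$-theory class is therefore the same number computed either way.

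**The main obstacle** I anticipate is the analytic bookkeeping at the classical limit $t\to 0$: one must show that the index cocycle (operator trace) really degenerates to the symplectic-volume cyclic cocycle with exactly the constant $\frac{1}{(2\pi i)^n n!}$, uniformly enough along the field that the pairing is continuous in $t$. This is where the hypothesis that $a$ has \emph{positive} order is essential — it guarantees the integrand $\mathrm{tr}(\hat e_a(d\hat e_a)^{2n})$ is integrable on $T^*\mathbb{R}^n$, so the limiting cocycle is actually defined on a subalgebra containing $\hat e_a$, and it controls the trace-class estimates needed to keep the quantized projections in the domain of the trace for $t>0$. Everything else — the graph-projection/index identity, the $K$-theory invariance under evaluation on a field over an interval, and the normalization of the cyclic character map on $C_0(T^*\mathbb{R}^n)$ — I would treat as standard and cite accordingly.
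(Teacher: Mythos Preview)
Your outline is essentially the ENN2 argument the paper follows, and all the structural ingredients---graph projection, continuous field over $[0,1]$, pairing with cyclic cohomology, passage to the classical limit---are correctly identified. One point deserves sharpening, however, because it is precisely where the work lies. You write that ``the operator trace degenerates in the classical limit to $\frac{1}{(2\pi i)^n n!}$ times the cyclic cocycle built from the symplectic form,'' and later that the field furnishes a single cyclic cocycle whose restrictions at the two endpoints are the trace (a $0$-cocycle) and the symplectic cocycle (a $2n$-cocycle). Taken literally this cannot hold: the trace pairing $\mathrm{Tr}(\rho_\hslash(\hat f))$ blows up like $\hslash^{-n}$ as $\hslash\to 0$, and a $0$-cocycle does not restrict to a $2n$-cocycle.

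The actual mechanism, which the paper makes explicit, is a two-step replacement. First one constructs a $2n$-cocycle $\omega$ on $\mathcal{K}^\infty$ out of the $2n$ commuting derivations $\delta_{2j-1}=[\partial_{x_j},\cdot]$, $\delta_{2j}=[x_j,\cdot]$, and proves separately that $\langle [T],[(2\pi i)^n n!\,\omega]\rangle=\mathrm{Tr}(T)$ for every idempotent $T$ (Proposition~\ref{prop6} in the Appendix; this uses the $S$-operator and an induction on $n$). Second, one shows that \emph{this} $\omega$---not the trace---has a finite limit as $\hslash\to 0$, namely the cocycle $\epsilon$ given by integration over $T^*\mathbb{R}^n$ (the $n_g=n$, $g=1$ case of Lemma~\ref{lemma1}); the divergence of the trace is exactly cancelled by the factor $\hslash^{n}$ coming from the $n$ derivations $\delta_{2j}$. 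So the ``cocycle on the field'' you invoke is $\omega$ at every $\hslash>0$, and the identification with the trace at $\hslash=1$ is a lemma, not a restriction. With that correction your plan matches the paper.
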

    
    In order to state our result, let us recall the main idea of \cite{ENN2}. The index $\mathrm{ind}(P_a)$ can be regarded as an element of $K_0(\mathcal{K}(L^2(\mathbb{R}^n)))$ represented by the class of the graph projection $e$ of the closed operator associated with $P_a$. Note that the canonical trace $\mathrm{Tr}$ on $\mathcal{K}(L^2(\mathbb{R}^n))$ induces an isomorphism between $K_0(\mathcal{K}(L^2(\mathbb{R}^n)))$ and $\mathbb{Z}$. Then one can construct a continuous field of graph projections $(e_{\hslash})_{\hslash\in [0,1]}$ with specified restrictions satisfying $e_0=e_a$, $e_1=e$.
    Since the pairing of the class $[e_{\hslash}]\in K_0(\mathcal{K}(L^2(\mathbb{R}^n)))$ with the canonical trace $\mathrm{Tr}$ diverges at $\hslash=0$, we replace $\mathrm{Tr}$ by a densely defined equivalent (in cyclic periodic cohomology) cyclic cocycle $\omega$. The index formula is then obtained by taking the limit of the pairing as $\hslash$ converges to 0:
    \begin{align*}
    	\mathrm{ind}(P_a)\equiv \langle [e_{\hslash}], [\omega] \rangle_{\hslash>0} =\langle [e_{a}], [\epsilon] \rangle,
    \end{align*}
    where the last pairing is between some densely defined cyclic cocycle $\epsilon$ on $C_0(T^*\mathbb{R}^n)$ and the class $[e_a]$ in $K_0(C_0(T^*\mathbb{R}^n))$.

    In this paper, we study the equivariant version of Theorem \ref{th1}
    by considering a compact subgroup $G\leq SO(n)$ acting on $\mathbb{R}^n$ by isometry.
    {Let $P_a: \mathcal{S}(\mathbb{R}^{n};V) \to \mathcal{S}(\mathbb{R}^{n};W)$ be an elliptic pseudodifferential operator of positive order on $\mathbb{R}^n$}, where $V,W$ are complex vector spaces. Suppose $P_a$ is $G$-invariant, that is, $V,W$ are $G$-spaces and for all $g\in G$,
    \begin{align*}
    g|_{L^2({\mathbb{R}}^n; W)}P_a=P_ag|_{L^2({\mathbb{R}}^n; V)},
    \end{align*}
    where $g\in G$ is identified with the corresponding unitary operator on $L^2(\mathbb{R}^n;V)$ (resp. $L^2(\mathbb{R}^n;W)$), denoted by $g|_{L^2({\mathbb{R}}^n; V)}$ (resp. $g|_{L^2({\mathbb{R}}^n; W)}$).
	Denote by $\mathrm{ind}_G(P_a)$ the equivariant index of a $G$-invariant elliptic pseudodifferential operator $P_a$ given by,
	\begin{align}
		\label{map2}
		\mathrm{ind}_G(P_a):\  G &\to \mathbb{C}\\
		g &\mapsto  
		\begin{aligned}[t]
			\mathrm{ind}_{(g)}(P_a)&=\mathrm{Tr}(g|_{L^2({\mathbb{R}}^n; V)}\mathrm{Ker}P_a)-\mathrm{Tr}(g|_{L^2({\mathbb{R}}^n; W)}\mathrm{Ker}P_a^*),
		\end{aligned} \notag
	\end{align}
    where $\mathrm{Ker}P_a$ denotes the orthogonal projection of ${L^2({\mathbb{R}}^n; V)}$ onto the kernel of $P_a$, while $\mathrm{Ker}P_a^*$ is similar.

	The main result of this paper is the generalization of Theorem \ref{th1} to $G$-invariant operators, stated as follows.
	It is easy to see that Theorem \ref{th3} reduces to Theorem \ref{th1} when $g$ is the group identity.
	\begin{theorem}
		\label{th3}
		Suppose $G \le SO(n)$ is a compact subgroup and $P_a: \mathcal{S}(\mathbb{R}^{n};V) \to \mathcal{S}(\mathbb{R}^{n};W)$ is a $G$-invariant elliptic pseudodifferential operator of positive order on $\mathbb{R}^n$, where $V,W$ are $G$-spaces. For $g\in G$, if ${(\mathbb{R}^n)}^g$, the fixed-point set of the action of $g$ on $\mathbb{R}^n$, has positive dimension, then
		\begin{align*}
			\mathrm{ind}_{(g)}(P_a)= \frac{1}{(2\pi i)^{n_g}{n_g}!\mathrm{det}(g-1) }\int_{T^*{(\mathbb{R}^n)}^g} \mathrm{tr}\left[\left(\begin{matrix}
				g^V & 0\\
				0 & g^W\\
			\end{matrix}
			\right) \hat{e_a}(d\hat{e_a})^{2n_g}\right],
		\end{align*}
		where $n_g=\mathrm{dim}{(\mathbb{R}^n)}^g$, $g^V$ (resp. $g^W$) denotes the matrix corresponding to $g\in Aut(V)$ (resp. $Aut(W)$).
	\end{theorem}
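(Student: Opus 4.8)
The plan is to follow the strategy behind Theorem \ref{th1} but keep track of the group element $g$ throughout, reducing everything at $\hslash=0$ to the fixed-point submanifold $({\mathbb{R}^n})^g$. First I would set up the $G$-equivariant analogue of the continuous field of $C^*$-algebras used in \cite{ENN2}: the field $(A_\hslash)_{\hslash\in[0,1]}$ with $A_0=C_0(T^*{\mathbb{R}}^n)$ and $A_\hslash=\mathcal{K}(L^2({\mathbb{R}}^n))$ for $\hslash>0$, now carrying the $G$-action induced by isometries of ${\mathbb{R}}^n$ (the lift to $T^*{\mathbb{R}}^n$ at $\hslash=0$, and the conjugation action on compacts via the unitaries $g|_{L^2}$ for $\hslash>0$). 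The graph projection $e_a$ and the operator-level graph projection $e$ assemble into a $G$-invariant continuous field of projections $(e_\hslash)_\hslash$ exactly as in the non-equivariant case, because $G$-invariance of $P_a$ makes each step equivariant. The quantity $\mathrm{ind}_{(g)}(P_a)$ is then the pairing of $[e_1]=[e]$ with the trace-type functional $\phi_g(T)=\mathrm{Tr}(g|_{L^2}\,T)$ on $\mathcal{K}$, which is a $g$-twisted trace and represents a class in the ($G$-equivariant, or better: localized at $g$) periodic cyclic cohomology.

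Next I would transport $\phi_g$ to a densely-defined cocycle on the other end of the field. The key analytic input is the same as in \cite{ENN2}: replace the unbounded $\mathrm{Tr}(g\,\cdot)$ by a cyclic cocycle $\omega_g$ on a dense subalgebra that is cohomologous to it in periodic cyclic cohomology and whose pairing with $[e_\hslash]$ extends continuously to $\hslash=0$. Because $\phi_g$ only "sees" the behaviour near the fixed-point set of $g$ (this is the operator-theoretic shadow of the Atiyah-Bott fixed point phenomenon), the $\hslash\to 0$ limit of $\langle[e_\hslash],[\omega_g]\rangle$ is a pairing living on $C_0(T^*({\mathbb{R}^n})^g)$ rather than all of $C_0(T^*{\mathbb{R}}^n)$: the normal directions to $({\mathbb{R}^n})^g$ contribute, via a Gaussian/oscillatory integral in the semiclassical limit, exactly the Jacobian factor $\mathrm{det}(g-1)^{-1}$ (the inverse of the determinant of $g-1$ acting on the normal bundle, which is all of the normal space since $g$ acts on ${\mathbb{R}}^n$ linearly with fixed subspace $({\mathbb{R}^n})^g$). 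Here one uses $G\le SO(n)$ so that $g-1$ is invertible on the normal space, and positive dimension of $({\mathbb{R}^n})^g$ so that the surviving base integral is over a nontrivial space. I would then identify the resulting cocycle $\epsilon_g$ on $C_0(T^*({\mathbb{R}^n})^g)$ with the $g$-twisted $2n_g$-dimensional cyclic cocycle given by integrating $2n_g$-forms with the matrix weight $\mathrm{diag}(g^V,g^W)$, so that
\[
\mathrm{ind}_{(g)}(P_a)=\langle[e_a],[\epsilon_g]\rangle=\frac{1}{(2\pi i)^{n_g}{n_g}!\,\mathrm{det}(g-1)}\int_{T^*({\mathbb{R}^n})^g}\mathrm{tr}\!\left[\left(\begin{matrix} g^V & 0\\ 0 & g^W\end{matrix}\right)\hat{e_a}(d\hat{e_a})^{2n_g}\right].
\]

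For the explicit constant and the localization I would work in adapted coordinates: split ${\mathbb{R}}^n=({\mathbb{R}^n})^g\oplus N$ with $g=1\oplus g_N$ and $g_N-1$ invertible; the relevant Weyl-quantized operators factor as a tensor product over these two subspaces, so the computation of $\mathrm{Tr}(g|_{L^2(N)}\,\cdot)$ against the Bott-type projection on $N$ reduces to a finite-dimensional Gaussian integral whose value is $\mathrm{det}(g_N-1)^{-1}$ up to the $2\pi i$ normalizations already present in Theorem \ref{th1}; the tangential factor reproduces verbatim the unequivariant formula on $({\mathbb{R}^n})^g$ but with the extra matrix weight $\mathrm{diag}(g^V,g^W)$ coming from the action on the coefficient spaces $V,W$.

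The main obstacle I anticipate is the analytic control of the $\hslash\to 0$ limit in the presence of the noncompact, now only \emph{partially} localizing twist $\phi_g$: one must show that the trace $\mathrm{Tr}(g|_{L^2}\,\cdot)$ paired with the graph projection field indeed localizes to $T^*({\mathbb{R}^n})^g$ uniformly in $\hslash$, i.e. that the off-fixed-point contributions are negligible as $\hslash\to 0$, and simultaneously that the cocycle $\omega_g$ chosen to tame the divergence is genuinely cohomologous to $\phi_g$ as a $g$-twisted (equivariant) periodic cyclic cocycle. This is where the bulk of the estimates live — a semiclassical stationary-phase argument combined with the asymptotic-morphism/continuous-field bookkeeping of \cite{ENN2}; everything else (equivariance of the field, the algebraic identification of $\epsilon_g$, the Gaussian integral giving $\mathrm{det}(g-1)^{-1}$, and the reduction to Theorem \ref{th1} at $g=1$) is comparatively routine.
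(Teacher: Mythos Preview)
Your strategy is essentially the paper's: build the $G$-equivariant continuous field of graph projections, pair with a family of $g$-twisted cyclic cocycles $\omega_g$, and take the semiclassical limit to localize on $T^*(\mathbb{R}^n)^g$ with the factor $\det(g-1)^{-1}$ emerging as a Jacobian. Two points of precision are worth flagging. First, the paper makes $\omega_g$ completely explicit, and the choice is the crux: it is a $2n_g$-cocycle (not a $2n$-cocycle) built from only the $2n_g$ derivations $[\partial_{x_j},\cdot]$ and $[x_j,\cdot]$ for $j=1,\dots,n_g$ along the fixed-point directions, with $\hat g$ inserted inside the trace. The fact that $g$ commutes with precisely these operators is what makes $\omega_g$ a genuine cyclic cocycle on the $g$-invariant smoothing operators, and it is also what makes the $\hslash$-powers balance: each $[x_j,\cdot]$ contributes a factor $\hslash$, the kernel trace formula contributes $\hslash^{-n}$, and the residual $\hslash^{-(n-n_g)}$ is absorbed by rescaling the normal variable. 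Second, the localization step is more elementary than stationary phase or a Gaussian integral: after writing $\mathrm{Tr}(\rho_\hslash(\hat f)\hat g)$ as an explicit kernel integral, one substitutes $v\mapsto \hslash w$ in the normal directions, then makes the linear change $u=(g-1)w$ (Jacobian $\det(g-1)^{-1}$), and finally applies ordinary Fourier inversion to collapse the normal integration to evaluation at zero. So the ``main obstacle'' you anticipate largely dissolves once $\omega_g$ is chosen with the correct degree; the cohomological equivalence $\langle[T],[(2\pi i)^{n_g}n_g!\,\omega_g]\rangle=\mathrm{Tr}(\hat g T)$ on idempotents is then proved by the same inductive $S$-operator argument as in the non-equivariant appendix, not by any new estimate.
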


    \begin{theorem}
    	\label{th4}
    	Suppose $G \le SO(n)$ is a compact subgroup and $P_a: \mathcal{S}(\mathbb{R}^{n};V) \to \mathcal{S}(\mathbb{R}^{n};W)$ is a $G$-invariant elliptic pseudodifferential operator of positive order on $\mathbb{R}^n$, where $V,W$ are $G$-spaces.
    	If ${(\mathbb{R}^n)}^g$, the fixed-point set of the action of $g\in G$ on $\mathbb{R}^n$, has only isolated points, that is, ${(\mathbb{R}^n)}^g=\{0\}$.
    	Then
    	\begin{align*}
    		\mathrm{ind}_{(g)}(P_a)= \frac{1}{\mathrm{det}(g-1) } \mathrm{tr}\left[\left(\begin{matrix}
    			g^V & 0\\
    			0 & g^W\\
    		\end{matrix}
    		\right) \hat{e_a}(0,0)\right],
    	\end{align*}
    	where $g^V$ (resp. $g^W$) denotes the matrix corresponding to $g\in Aut(V)$ (resp. $Aut(W)$).
    \end{theorem}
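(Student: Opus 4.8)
The plan is to run the same continuous-field argument that proves Theorem \ref{th3}, now specialized to the degenerate case $n_g=0$, and to observe that the limiting cyclic cocycle on $C_0(T^*\mathbb{R}^n)$ is then simply a ($g$-twisted) trace. First I would set up the deformation exactly as for Theorems \ref{th1} and \ref{th3}: form the continuous field of $C^*$-algebras $(A_{\hslash})_{\hslash\in[0,1]}$ with $A_0=C_0(T^*\mathbb{R}^n)$ and $A_{\hslash}\cong\mathcal{K}(L^2(\mathbb{R}^n))$ for $\hslash>0$, coming from the rescaled Weyl/Schr\"odinger quantization, and equip it with the $G$-action induced by the isometric action on $\mathbb{R}^n$ (conjugation by the unitaries $g|_{L^2}$ for $\hslash>0$, and the cotangent-lifted symplectic action on $T^*\mathbb{R}^n$ for $\hslash=0$). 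Since $P_a$ is $G$-invariant, the continuous field of graph projections $(e_{\hslash})$ from the proof of Theorem \ref{th3}, with $e_0=e_a$ and $e_1=e$ the graph projection of the closure of $P_a$, consists of $G$-invariant elements and represents the equivariant index class in $K_0^G(A_{\hslash})$.

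Next I would bring in the $g$-twisted trace $\tau_g^{\hslash}(b)=\mathrm{Tr}(g|_{L^2}\,b)$ on the trace-class operators ($\hslash>0$), extended $g$-equivariantly to matrices; by the $G$-invariance of $P_a$ its relevant eigenspaces are $g$-invariant, so $\tau_g^{\hslash}(\hat{e_{\hslash}})=\mathrm{ind}_{(g)}(P_a)$. Following the proof of Theorem \ref{th3}, I would replace the family $(\tau_g^{\hslash})_{\hslash>0}$ by a family $(\eta_g^{\hslash})_{\hslash\in[0,1]}$ of densely defined cyclic cocycles on a suitable dense $*$-subalgebra (smoothing operators, resp.\ a Schwartz-type subalgebra at $\hslash=0$), cohomologous to $\tau_g^{\hslash}$ in periodic cyclic cohomology for $\hslash>0$, varying continuously in $\hslash$, and defined also at $\hslash=0$; ellipticity and positivity of the order of $a$ ensure $\hat{e_{\hslash}}$ and the homotopies lie in the relevant domains, so $\hslash\mapsto\langle[e_{\hslash}],[\eta_g^{\hslash}]\rangle$ is locally constant on $[0,1]$.

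Then I would pass to the limit $\hslash\to0$, which gives $\mathrm{ind}_{(g)}(P_a)=\langle[e_a],[\eta_g^{0}]\rangle$, where $\eta_g^{0}$ is the classical limit of the $g$-twisted trace --- a de Rham current on $T^*\mathbb{R}^n$ supported on the fixed set $T^*(\mathbb{R}^n)^g$ of the cotangent-lifted action. This is exactly where the hypothesis enters: since $(\mathbb{R}^n)^g=\{0\}$, the matrix $g-1$ is invertible, that fixed set is the single point $(0,0)$, and the stationary-phase computation governing the small-$\hslash$ asymptotics of $\mathrm{Tr}(g|_{L^2}\,\mathrm{Op}_{\hslash}(f))$ identifies $\eta_g^{0}$ with the $0$-cocycle $f\mapsto\frac{1}{\det(g-1)}f(0,0)$. (This is the $n_g=0$ instance of the current in Theorem \ref{th3}: the integral over $T^*(\mathbb{R}^n)^g$ collapses to evaluation at a point, $(d\hat{e_a})^{2n_g}$ is an empty product, and $\frac{1}{(2\pi i)^{n_g}n_g!}=1$.) Finally, pairing the graph projection --- a $2\times2$ block matrix over the unitization of $C_0(T^*\mathbb{R}^n)$ with entries acting on $V$ and $W$ --- with the $g$-twisted $0$-cocycle $\eta_g^0$ amounts to applying $\eta_g^0$ together with the $g$-twisted matrix trace on $\mathrm{End}(V\oplus W)$, i.e.\ $A\mapsto\mathrm{tr}(\mathrm{diag}(g^V,g^W)A)$, which yields $\frac{1}{\det(g-1)}\mathrm{tr}\!\left[\begin{pmatrix}g^V&0\\0&g^W\end{pmatrix}\hat{e_a}(0,0)\right]$, as claimed.

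The main obstacle is the third step: verifying that the classical limit of the $g$-twisted trace is precisely $\frac{1}{\det(g-1)}$ times evaluation at the origin, with the stated normalization, and that the cocycles of the second step can be chosen so that the $K$-theory pairing stays constant all the way to $\hslash=0$. This is the $n_g=0$ --- hence arithmetically simplest --- case of the corresponding step for Theorem \ref{th3}, as the limiting cocycle now carries no de Rham differential; the remaining bookkeeping (domains, continuity of the field of cocycles, the equivariant McKean--Singer identity) is routine once that step is secured.
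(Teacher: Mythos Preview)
Your proposal is correct and follows essentially the paper's approach: continuous field of $G$-invariant graph projections $(e_\hslash)$, pairing with a $g$-twisted cyclic cocycle on the fibers, and passing to the limit $\hslash\to 0$ to land on the localized cocycle $\epsilon_g$ on $C_0(T^*\mathbb{R}^n)$. One simplification worth noting: in the $n_g=0$ case your ``replace by a cohomologous family'' step is vacuous---the paper's cocycle $\omega_g$ is already the $g$-twisted trace $T\mapsto\mathrm{Tr}(\hat g\,T)$ (a $0$-cocycle), and since $g-1$ is invertible on all of $\mathbb{R}^n$ the substitution $w=x/\hslash$ in $\mathrm{Tr}(\hat g\,\rho_\hslash(\hat f))=\frac{1}{(2\pi\hslash)^n}\int_{\mathbb{R}^n}\mathrm{tr}\bigl[\mathrm{diag}(g^V,g^W)\,\hat f(x,\tfrac{gx-x}{\hslash},\hslash)\bigr]\,dx$ cancels the $\hslash^{-n}$ and gives a finite limit $\frac{1}{\det(g-1)}\mathrm{tr}\bigl[\mathrm{diag}(g^V,g^W)\,f(0,0,0)\bigr]$ directly, with no higher cocycle needed.
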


    Refer to the following remark for the notation $\mathrm{det}(g-1)$ in Theorem \ref{th3} and Theorem \ref{th4}.
    
   \begin{remark}
   	  It is clear that $(\mathbb{R}^n)^g$ is a subspace, for every $g\in G$. Decompose $\mathbb{R}^n$ as
   	  \begin{align*}
   	  	\mathbb{R}^n= (\mathbb{R}^n)^g \oplus \mathcal{N}(\mathbb{R}^n)^g,\ \forall g\in G,
   	  \end{align*}
     where $\mathcal{N}(\mathbb{R}^n)^g$ denotes the orthogonal complement subspace of $(\mathbb{R}^n)^g$. Denote by $\mathrm{det}(g-1)$ the determinant of the linear transform $g-1$ on $\mathcal{N}(\mathbb{R}^n)^g$.
   \end{remark}
    
     The main challenge of this paper is as follows.
     Note that the continuous field of graph projections  $(e_{\hslash})_{\hslash\in [0,1]}$ exists as above. 
     However, one has to construct the analogues of the cyclic cocycles $\omega$ and $\epsilon$ in the equivariant case.
     On one hand, to compute the equivariant index of the $G$-invariant operator $P_a$,
     $\omega$ has to be replaced by a family of cyclic cocycles $(\omega_g)_{g\in G}$ satisfying 
      \begin{align*}
     	\mathrm{ind}_{(g)}(P_a) = \langle [e_{1}], [{\omega}_g] \rangle.
     \end{align*}
     On the other hand, after rescaling, one has to show that when $\hslash$ converges to 0,
     \begin{align*}
     	\lim_{\hslash \to 0}\langle [e_{\hslash}], [{\omega}_g] \rangle =\langle [e_{a}], [\epsilon_g] \rangle,
     \end{align*}
     where $\epsilon$ is replaced by a family of cyclic cocycles $(\epsilon_g)_{g\in G}$ such that the right hand side of the above equality is the fixed point formula.

    Our main motivation is to verify that the equivariant index of the Bott-Dirac operator on $\mathbb{R}^{2n}$ is equal to 1, which sheds light on the $\gamma$-element associated with the Baum-Connes conjecture for isometry groups of $\mathbb{R}^{2n}$.

    The paper is organized as follows. In Section 2, we introduce the class of pseudodifferential operators studied by M. Shubin in \cite{Sh}. Section 3 is divided into four parts to prove Theorem \ref{th3} and Theorem \ref{th4} following the idea of \cite{ENN2}. In Section 3.1, $\mathrm{ind}_G(P_a)$ is represented by the class of a $G$-invariant graph projection $e$ in $K^G_0(\mathcal{K}(L^2(\mathbb{R}^n;V\oplus W)))$.
    In Section 3.2-3.3, we construct a continuous field of $G$-invariant graph projections $(e_{\hslash})_{\hslash\in [0,1]}$ with specified restrictions $e_0=e_a$, $e_1=e$.
    In Section 3.4, we generalize $\omega$ and $\epsilon$ in \cite{ENN2} to families of cyclic cocycles $(\omega_g)_{g\in G}$ and $(\epsilon_g)_{g\in G}$ and show that the equivariant analytic index is equal to the pairing:
    \begin{align*}
    	\mathrm{ind}_{(g)}(P_a)\equiv \langle [e_{\hslash}], [{\omega}_g] \rangle_{\hslash>0} =\langle [e_{a}], [\epsilon_g] \rangle.
    \end{align*}
    The last formula of the above equality is obtained by taking the limit of the pairing as $\hslash$ converges to 0.
    In Section 4, we give an example on the equivariant index of the Bott-Dirac operator on $\mathbb{R}^{2n}$.

    Throughout this paper, for a $G$-$C^{*}$-algebra $A$, i.e., a $C^{*}$-algebra $A$ with a norm continuous action of a locally compact and separable group $G$, we will use the notation $A^G$ for the $C^{*}$-subalgebra of the $G$-invariant elements of $A$. For a non-unital $C^{*}$-algebra $A$, denote by $A^{\sim}$ the unitization of $A$.

	\section{Preliminaries}
	\label{s5}

	Here we summarize certain facts on the class of $\Psi$DO we are interested in. See \cite{Sh} for more details.
	
	Let $u,v\in \mathcal{S}({{\mathbb{R}}^n})$ be rapidly decreasing functions. The Fourier transform of $u$ is defined by 
	\begin{align*}
		(Fu)(\xi)=\int_{{\mathbb{R}^{n}}}{e^{-i\langle x, \xi\rangle}u(x)}dx,
	\end{align*}
	where $\langle x, \xi\rangle$ denotes the Euclidean inner product of ${\mathbb{R}}^n$ between $x$ and $\xi$, and the inverse Fourier transform of $v$ is given by 
	\begin{align*}
		(F^{-1}v)(x)=(2\pi)^{-n}\int_{{\mathbb{R}^{n}}}{e^{i\langle x, \xi\rangle}v(\xi)}d\xi.
	\end{align*}
	Then $u$ can be written as 
	\begin{align*}
		u(x)=(2\pi)^{-n}\int_{{\mathbb{R}^{n}}}{e^{i\langle x, \xi\rangle}(Fu)(\xi)}d{\xi}.
	\end{align*}
	Let $a\in {C^{\infty}(T^*{\mathbb{R}}^n)}$. A pseudodifferential operator on ${\mathbb{R}}^n$ associated to $a$ is an operator of the form
	\begin{align}
		\label{eq1}
		(P_au)(x)=(2\pi)^{-n}\int_{{\mathbb{R}^{n}}}{e^{i\langle x, \xi\rangle}a(x,\xi)(Fu)(\xi)}d{\xi},
	\end{align}
	where $u\in \mathcal{S}({{\mathbb{R}}^n})$ and $a$ will be called the (total) symbol of $P_a$. In order for the above formula to make sense, the symbol needs to satisfy further conditions in the following definition.
	\begin{definition}[\cite{Sh}]
		\label{def5}
		Let $m\in \mathbb{R}$. The symbol class $\Gamma^m({{\mathbb{R}}^n})$ consists of functions $a\in {C^{\infty}({\mathbb{R}}^n)}$ such that for any multi-index $\alpha$, there exists a constant $C_{\alpha}$ such that
		\begin{align*}
			\left|{\partial}^{\alpha}_z(a)\right| \le C_{\alpha}(1+|z|^2)^{\frac{m-|{\alpha}|}{2}},
		\end{align*} 
	where $|z|^2={z_1}^2+\cdots+{z_n}^2$ for $z=(z_1,\cdots,z_n)\in {\mathbb{R}}^n$. Elements of $\Gamma^m({{\mathbb{R}}^n})$ are said to be of order $m$.
	\end{definition} 
	A symbol $a\in \Gamma^m({{\mathbb{R}}^n\times {\mathbb{R}}^n})$ defines a $\Psi$DO  $P_a$ by formula (\ref{eq1}). The operator $P_a$ is also said to be of order $m$. It can be checked that 
	\begin{align*}
		P_a:\mathcal{S}(\mathbb{R}^n) \to \mathcal{S}(\mathbb{R}^n)
	\end{align*} 
    is a continuous operator.
	
	In this paper, matrix-valued symbols are considered. 
	Let $V={\mathbb{C}}^k$ and $W={\mathbb{C}}^l$, where $k,l\in \mathbb{N}$. 
	Let $a\in M_{l\times k}{(\Gamma^m({\mathbb{R}}^n\times{{\mathbb{R}}^n}))}$ be an $l\times k$ matrix of symbols of order $m$. Then the formula (\ref{eq1}) still makes sense after replacing $u\in \mathcal{S}({{\mathbb{R}}^n})$ by $\mathcal{S}({{\mathbb{R}}^n}; V)$. Thus 
	\begin{align*}
		P_a:\mathcal{S}(\mathbb{R}^n;V) \to \mathcal{S}(\mathbb{R}^n;W)
	\end{align*}
    is obtained.
    We shall say that $a$ is an $M_{l\times k}$-valued symbol of order $m$.
    In this paper, we focus on the 
    square matrix valued symbols when $V=W={\mathbb{C}}^k$ (but we still distinguish $V$ and $W$ because spaces of the same dimension may differ as $G$-spaces in the equivariant version) and investigate the ellipticity in the following definition.
	\begin{definition}
		\label{def4}
		Let $a$ be an $M_{k}$-valued symbol of order $m$. Then $a$ is said to be elliptic if there exist positive constants $C$ and $R$ such that
		\begin{align*}
			a(x,\xi)^*a(x,\xi) \ge C(|x|^2+|\xi|^2)^mI_k\ \mathrm{for}\  |x|^2+|\xi|^2\ge R,
		\end{align*}
	where $I_k$ denotes the $k\times k$ identity matrix, $\ge$ refers to the usual ordering of self-adjoint matrices and $*$ denotes the usual adjoint of matrices.
	\end{definition}
	If $a$ is an elliptic symbol, then the $\Psi$DO  $P_a$ by formula (\ref{eq1}) is also said to be elliptic.

	For each $a\in \Gamma^m({{\mathbb{R}}^n\times {\mathbb{R}}^n})$, there exists $a^{\prime}\in \Gamma^m({{\mathbb{R}}^n\times {\mathbb{R}}^n})$ such that
	\begin{align*}
		(P_a\phi, \psi)=(\phi, P_{a^{\prime}}\psi),\ \phi, \psi\in \mathcal{S}({\mathbb{R}}^n),
	\end{align*}
	where $(\cdot,\cdot)$ refers to the inner product in $L^2({\mathbb{R}}^n)$. Thus $P_a$ is formally adjoint to $P_{a^{\prime}}$. Since $P_{a^{\prime}}$ is also continuous, $P_a$ extends to a continuous map 
	\begin{align*}
		P_a:\mathcal{S}(\mathbb{R}^n)^{\prime} \to \mathcal{S}(\mathbb{R}^n)^{\prime},
	\end{align*}
	where $\mathcal{S}(\mathbb{R}^n)^{\prime}$ denotes the space of tempered distributions.
	
	\begin{remark}
		\begin{enumerate}
			\item [(1)] By Theorem 24.3 of \cite{Sh}, if $a\in \Gamma^0({{\mathbb{R}}^n\times {\mathbb{R}}^n})$, then $P_a:L^2(\mathbb{R}^n) \to L^2(\mathbb{R}^n)$ is a bounded operator. 
			\item [(2)] By Theorem 24.4 of \cite{Sh}, if $a\in \Gamma^m({{\mathbb{R}}^n\times {\mathbb{R}}^n})$ and $m < 0$, then $P_a:L^2(\mathbb{R}^n) \to L^2(\mathbb{R}^n)$ is a compact operator.
		\end{enumerate}
	\end{remark}
	
	\begin{remark}
		\label{rmk1}
		If $P_a:\mathcal{S}(\mathbb{R}^n;V) \to \mathcal{S}(\mathbb{R}^n;W)$ is an elliptic $\Psi$DO, by Theorem 25.3 of \cite{Sh}, then $P_a$ is a Fredholm operator. Since $P_a$ is elliptic, the formal adjoint $P^*_a$ of $P_a$ is also elliptic, and the space $\mathrm{Im}(P_a)$ in $\mathcal{S}(\mathbb{R}^{n};W)$ is the orthogonal complement to $\mathrm{Ker}P^*_a|_{\mathcal{S}(\mathbb{R}^{n};W)}$ with respect to the inner product in $L^2({\mathbb{R}}^n; W)$. Then the Fredholm index is obtained by 
		\begin{align*}
			\mathrm{ind}(P_a)
			=\mathrm{dim}(\mathrm{Ker}P_a|_{\mathcal{S}(\mathbb{R}^{n};V)})-\mathrm{dim}(\mathrm{Ker}P^*_a|_{\mathcal{S}(\mathbb{R}^{n};W)}). 
		\end{align*}
		Consider 
		\begin{align}
			\label{s4}
			P_a: L^2(\mathbb{R}^n;V)\to L^2(\mathbb{R}^n;W)
		\end{align} 
	    as a possibly unbounded operator with dense domain $\{f\in L^2(\mathbb{R}^n;V)| P_af\in L^2(\mathbb{R}^n;W) \}$. Then by Theorem 25.3 of \cite{Sh}, $P_a$ of the form (\ref{s4}) is also a Fredholm operator. It can be verified that indices of the two Fredholm operators are equal. Indeed, by Theorem 25.2 of \cite{Sh},
		\begin{align*}
			\mathrm{Ker}P_a|_{\mathcal{S}(\mathbb{R}^{n};V)}=\mathrm{Ker}P_a|_{L^2(\mathbb{R}^{n};V)},
		\end{align*}
	and while $P^*_a$ is similar, we obtain
	\begin{align*}
		\mathrm{ind}(P_a)
		=\mathrm{dim}(\mathrm{Ker}P_a|_{L^2(\mathbb{R}^{n};V)})-\mathrm{dim}(\mathrm{Ker}P^*_a|_{L^2(\mathbb{R}^{n};W)}). 
	\end{align*}
	\end{remark}


	\section{Proof of the Equivariant Index Theorem}

	\subsection{The Equivariant Analytic Index}
	
	Let $P: \mathcal{S}({\mathbb{R}}^n; V) \to \mathcal{S}({\mathbb{R}}^n; W)$ be an equivariant elliptic $\Psi$DO of order $m>0$. 
	Consider $P$ as an unbounded operator on $L^2({\mathbb{R}}^n; V)$ with dense domain $\mathcal{S}({\mathbb{R}}^n; V)$. 
	Since $P$ has the formal adjoint $P^*$, $P$ is closable.  
	Denote by $T$ the closure of $P$. It can be verified that $T$ is $G$-invariant.
	
	Let $e$ denote the orthogonal projection of $L^2({\mathbb{R}}^n; V)\oplus L^2({\mathbb{R}}^n; W) \cong L^2({\mathbb{R}}^n; V\oplus W) $ onto the graph of the closed operator $T$. The projection can be represented by the matrix:
	$$
			e={\left(\begin{matrix}
					(1+T^*T)^{-1} & (1+T^*T)^{-1}T^*\\
					T(1+T^*T)^{-1} & T(1+T^*T)^{-1}T^*\\
				\end{matrix}
				\right)}.
	$$
	We call $e$ the graph projection of $T$. 
	By Section 4 of \cite{ENN2} and the fact that $T$ is $G$-invariant, we have
    \begin{align*}
    	e-\left(\begin{matrix}
    		0 & 0\\
    		0 & 1\\
    	\end{matrix}
    	\right)\in (\mathcal{K}(L^2({\mathbb{R}}^n; V\oplus W)))^G.
    \end{align*}
    
    Let $\mathrm{Ker}P$ (resp. $\mathrm{Ker}P^*$) denote the projection of $L^2({\mathbb{R}}^n; V)$ (resp. $L^2({\mathbb{R}}^n; W)$) onto the kernel of $P$ (resp. $P^*$), respectively.
    \begin{proposition}
    	\label{prop7}
    	$[e]-\left[\left(\begin{matrix}
    		0 & 0\\
    		0 & 1\\
    	\end{matrix}
    	\right)\right]= [\mathrm{Ker}P]-[\mathrm{Ker}P^*]$ in $K_0\left((\mathcal{K}(L^2({\mathbb{R}}^n; V\oplus W)))^G\right)$.
    \end{proposition}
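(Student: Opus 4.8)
The plan is to carry out the argument of \cite{ENN2} (Section 4) equivariantly, checking at each step that every subspace, operator and homotopy is $G$-invariant, so that the computation takes place inside $\big(\mathcal{K}(L^2(\mathbb{R}^n;V\oplus W))\big)^G$ rather than inside $\mathcal{K}(L^2(\mathbb{R}^n;V\oplus W))$. Since $T$ is $G$-invariant and Fredholm, $\mathrm{Ker}T$ and $\mathrm{Ker}T^*$ are finite-dimensional $G$-invariant subspaces, and the orthogonal projections onto them are the projections denoted $\mathrm{Ker}P$ and $\mathrm{Ker}P^*$ in the statement (Theorem 25.2 of \cite{Sh}). Hence we have a $G$-invariant orthogonal decomposition
\begin{align*}
	L^2(\mathbb{R}^n;V\oplus W)=\mathrm{Ker}T\,\oplus\,(\mathrm{Ker}T)^\perp\,\oplus\,\mathrm{Ker}T^*\,\oplus\,(\mathrm{Ker}T^*)^\perp.
\end{align*}

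First I would set $Q:=T|_{(\mathrm{Ker}T)^\perp}$ and observe that the graph of $T$ is the orthogonal sum of $\mathrm{Ker}T\oplus 0$ and the graph of $Q$, the latter contained in $(\mathrm{Ker}T)^\perp\oplus(\mathrm{Ker}T^*)^\perp$. Consequently the graph projection splits $G$-invariantly as $e=\big(P_{\mathrm{Ker}T}\oplus 0\big)\oplus e_1$, where $e_1$ is the graph projection of $Q$, and $e-\begin{pmatrix}0&0\\0&1\end{pmatrix}$ decomposes into two block-diagonal compact pieces. This reduces the statement to
\begin{align*}
	{}[e]-\left[\begin{pmatrix}0&0\\0&1\end{pmatrix}\right]=\big([\mathrm{Ker}P]-[\mathrm{Ker}P^*]\big)+\left([e_1]-\left[\begin{pmatrix}0&0\\0&1\end{pmatrix}\right]\right),
\end{align*}
so it suffices to prove that the class $[e_1]-\big[\begin{pmatrix}0&0\\0&1\end{pmatrix}\big]$ vanishes in $K_0\big(\big(\mathcal{K}(L^2(\mathbb{R}^n;V\oplus W))\big)^G\big)$.

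For the invertible part, because $T$ is Fredholm, $\mathrm{Im}\,T=(\mathrm{Ker}T^*)^\perp$ is closed, so $Q$ is a closed bijection onto $(\mathrm{Ker}T^*)^\perp$ and $B:=Q^{-1}$ is bounded (closed graph theorem) and $G$-invariant. The graph projection of $Q$ can be rewritten as
\begin{align*}
	e_1=\begin{pmatrix}B\\1\end{pmatrix}(1+B^*B)^{-1}\begin{pmatrix}B^*&1\end{pmatrix}.
\end{align*}
I would then use that $(1+T^*T)^{-1}$ and $(1+TT^*)^{-1}$ are compact --- which is exactly the statement, recalled before the proposition, that $e-\begin{pmatrix}0&0\\0&1\end{pmatrix}$ lies in the compacts --- to deduce that $B^*B=(QQ^*)^{-1}$ and $BB^*=(Q^*Q)^{-1}$ are compact, and hence that $B$ itself is compact (its modulus $(B^*B)^{1/2}$ is compact, so $B$ is a partial isometry times a compact operator). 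Replacing $B$ by $sB$, $s\in[0,1]$, then yields a norm-continuous path $e_1(s)$ of $G$-invariant projections with $e_1(1)=e_1$, $e_1(0)=\begin{pmatrix}0&0\\0&1\end{pmatrix}$, and --- using compactness of $B$, $B^*B$ and $BB^*$ to control all four matrix entries --- with $e_1(s)-\begin{pmatrix}0&0\\0&1\end{pmatrix}$ a $G$-invariant compact for every $s$. Homotopy invariance of $K_0$ for projections that differ from a fixed one by a $G$-invariant compact then gives $[e_1]=\big[\begin{pmatrix}0&0\\0&1\end{pmatrix}\big]$, finishing the proof.

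The parts that require care --- and where I expect the only (routine) obstacle to lie --- are the bookkeeping with unbounded operators: justifying the orthogonal splitting of $\mathrm{graph}(T)$, the identities $Q(1+Q^*Q)^{-1}Q^*=1-(1+QQ^*)^{-1}$, $(Q^{-1})^*=(Q^*)^{-1}$ and $B^*B=(QQ^*)^{-1}$ on the correct domains, and especially the spectral argument that $B^*B$ and $BB^*$ are genuinely \emph{compact} rather than merely bounded, since this is what keeps the homotopy inside the compacts. The remaining point, that $\mathrm{Ker}T$, $\mathrm{Ker}T^*$, $Q$, $B$ and the whole path $e_1(s)$ are $G$-invariant, is immediate from the $G$-invariance of $T$, and is the only ingredient genuinely absent from \cite{ENN2}.
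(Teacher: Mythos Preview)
Your proposal is correct and takes essentially the same approach as the paper: the paper's proof consists of the single sentence ``The homotopy argument in the proof of Theorem 4.1 in \cite{ENN2} still holds in the $G$-invariant version,'' and you have simply written out that homotopy argument (splitting off $\mathrm{Ker}T$ and $\mathrm{Ker}T^*$, rewriting the graph projection of the invertible part via $B=Q^{-1}$, deducing compactness of $B$ from that of $(1+T^*T)^{-1}$, and contracting $B$ to $0$) while checking $G$-invariance throughout. The only cosmetic point is that in your displayed $K_0$-identity the projection $e_1$ and its companion $\begin{pmatrix}0&0\\0&1\end{pmatrix}$ live on $(\mathrm{Ker}T)^\perp\oplus(\mathrm{Ker}T^*)^\perp$ rather than on the full Hilbert space, but this is harmless since the homotopy $e(s)$ you build extends by the identity on $\mathrm{Ker}T$ and by zero on $\mathrm{Ker}T^*$ to give the desired path in $\big(\mathcal{K}(L^2(\mathbb{R}^n;V\oplus W))\big)^{G,\sim}$.
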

    Here by abuse of notation, $\mathrm{Ker}P$ (resp. $\mathrm{Ker}P^*$) also denotes the projection of $L^2({\mathbb{R}}^n; V\oplus W)$ induced by the inclusion map.
    \begin{proof}
    	The homotopy argument in the proof of Theorem 4.1 in \cite{ENN2} still holds in the $G$-invariant version.
    \end{proof}

	\subsection{Continuous Field of $C^*$-algebras}
	\label{s1}
	
	The idea of the continuous field of $C^*$-algebras is based on \cite{ENN2}. Consider the $(2n+1)$-dimensional Heisenberg group $H_{2n+1}=\{[v,x,t]| v,x\in \mathbb{R}^n,t\in \mathbb{R}
	\}$, where we denote $[v,x,t]$ by
	\begin{align*}
		\left(\begin{matrix}
			1 & x & t\\
			\ & \ddots & v\\
			0 & \ & 1
		\end{matrix}\right).
	\end{align*}
    Then we shall explain that the group $C^*$-algebra $C^*(H_{2n+1})$ can be identified as the algebra of continuous sections vanishing at infinity of a continuous field of $C^*$-algebras $(A^{\prime}_{\hslash})_{\hslash \in \mathbb{R}}$, where
	\begin{align*}
		A^{\prime}_{0}&=C_0(T^*{\mathbb{R}}^n),\\
		A^{\prime}_{\hslash}&=\mathcal{K}(L^2({\mathbb{R}}^n)),\ \hslash\neq 0.
	\end{align*}
    Here we may restrict the field to the interval $[0,1]$.
    
    Actually $H_{2n+1}$ can be viewed as a semidirect product. Denote by two closed subgroups of $H_{2n+1}$ as follows:
    \begin{align*}
    	M&=\{[0,x,t]|x\in \mathbb{R}^n,t\in \mathbb{R} \}\cong \mathbb{R}^{n+1},\\
    	N&=\{[v,0,0]|v\in \mathbb{R}^n \}\cong \mathbb{R}^n.
    \end{align*}
    Then $H_{2n+1}=M\rtimes_{\tau} N=\mathbb{R}^{n+1}\rtimes_{\tau} \mathbb{R}^n$, where
    \begin{align*}
    	(\tau{[v,0,0]})[0,x,t]=[v,0,0]^{-1}[0,x,t][v,0,0]=[0,x-tv,t],
    \end{align*}
    and it follows that
    \begin{align*}
    	C^*(H_{2n+1})=C_0(\mathbb{R}\times \mathbb{R}^n) \rtimes_{\tau} \mathbb{R}^n,
    \end{align*}
    where 
    \begin{align}
    	\label{eq2}
    	(\tau (v)f)(t,x)=f(t,x-tv),\ \forall v\in \mathbb{R}^n,\ \forall f\in C_0(\mathbb{R}\times \mathbb{R}^n),\ \forall t\in \mathbb{R},\ \forall x\in \mathbb{R}^n.
    \end{align}
    From the identification $C_0(\mathbb{R}\times \mathbb{R}^n)\cong C_0(\mathbb{R}, C_0(\mathbb{R}^n))$, $C_0(\mathbb{R}\times \mathbb{R}^n)$ can be viewed as a continuous field of $C^*$-algebras over $\mathbb{R}$ with fiber $C_0( \mathbb{R}^n)$, while by (\ref{eq2}), the action of $\mathbb{R}^n$ preserves fibers. Thus it is clear that $C^*(H_{2n+1})$ is a continuous field of $C^*$-algebras over $\mathbb{R}$ parametrised by $t$ with fiber $C_0( \mathbb{R}^n)\rtimes_{\tau_t} \mathbb{R}^n$ identified as follows.
    At $t=0$, when $\tau$ is reduced to the trival action, the fiber is $C_0( \mathbb{R}^n)\otimes C^*(\mathbb{R}^n)\cong C_0(T^*\mathbb{R}^n)$; when $t\neq 0$, by Takai duality, the fiber is $\mathcal{K}(L^2({\mathbb{R}}^n))$. 
    
    The continuous field structure is given by identifying $f\in \mathcal{S}(\mathbb{R}^{2n})$ within $C_0( \mathbb{R}^n)\rtimes_{\tau_t} \mathbb{R}^n$ and representing the latter elements on $L^2(\mathbb{R}^n,C_0( \mathbb{R}^n))$ with $t$ varying in $\mathbb{R}$, denoted by section
    \begin{align*}
    	t \mapsto \pi_t(f).
    \end{align*} 
    By direct computation, for $g\in L^2(\mathbb{R}^n,C_0( \mathbb{R}^n))$ and $s\in \mathbb{R}^n$,
    \begin{align*}
    	\pi_t(f)g(s)&=\int_{\mathbb{R}^n} f(\cdot,v) (v.g)(s)dv\\
    	&=\int_{\mathbb{R}^n} f(\cdot,v) (\tau_t(v)(g(s-v)))dv,
    \end{align*}
    after identifying $g$ with a function on $\mathbb{R}^n\times \mathbb{R}^n$, and by evaluating both sides of the above equality at $x\in \mathbb{R}^n$,
    \begin{align*}
    	\pi_t(f)g(x,s)=\int_{\mathbb{R}^n} f(x,v) g(x-tv,s-v)dv.
    \end{align*}
    For $t>0$, denote by $\rho_t(f)$ the compact operator on $L^2(\mathbb{R}^n)$ of the form
    \begin{align}
    	\label{eq3}
    	\rho_t(f)g(x)=\int_{{\mathbb{R}^{n}}}f(x,v)g(x-tv)dv.
    \end{align}
    In fact, the two representations $\pi_t$ and $\rho_t\otimes1$ are unitarily equivalent,
    since
    \begin{align*}
    	(U^*_t\pi_t(f)U_tg)(x,s)=\int_{{\mathbb{R}^{n}}}f(x,v)g(x-tv,s)ds,
    \end{align*}
    where
    \begin{align*}
    	(U_tg)(x,s)=g(x,s-\frac{x}{t}).
    \end{align*}
    For $t=0$, the identification $C_0( \mathbb{R}^n)\otimes C^*(\mathbb{R}^n)\cong C_0(T^*\mathbb{R}^n)$ implies that $\pi_0(f)=f$.
    
    In this paper, we make use of the continuous field structure in \cite{ENN2}, given by the family of sections of the form
    \begin{align*}
    	\Gamma=\{\hslash\in [0,1] \mapsto \rho_{\hslash}(\hat{f})\in A^{\prime}_{\hslash}|f\in \mathcal{S}(\mathbb{R}^{2n+1})
    	\},
    \end{align*}
    where $\hslash \in [0,1]$, $f\in \mathcal{S}(\mathbb{R}^{2n+1})$ and $\hat{f}$ denotes the Fourier transform with respect to the second variable of $f$, i.e.,
    \begin{align*}
    	\hat{f}(x,y,h)=\int_{{\mathbb{R}^{n}}} f(x,\xi, h)e^{-i\langle y, \xi\rangle}d\xi, \ \forall x,y\in \mathbb{R}^{n}, \ \forall h\in \mathbb{R}.
    \end{align*}
    Here for $\hslash>0$, $\rho_{\hslash}$ is expressed in $\hat{f}$ as below and is  slightly different from (\ref{eq3}),
    \begin{align*}
    	\rho_{\hslash}(\hat{f}) \phi(x)=\frac{1}{{(2\pi)}^{n}}\int_{{\mathbb{R}^{n}}} \hat{f}(x,y,\hslash)\phi(x+\hslash y)dy,
    \end{align*}
    which implies that $\rho_{\hslash}(\hat{f})$ is compact on $L^2(\mathbb{R}^n)$.
    For $\hslash=0$, define $\rho_{0}(\hat{f})\in C_0(T^*\mathbb{R}^n)$ as the restriction of $f$, i.e., $\rho_{0}(\hat{f})(x,\xi)=f(x,\xi,0)$.

	By \cite{ENN1} (Theorem 2.4 and proof of Theorem 3.1) and \cite{ENN2}, the families 
	$(A_{\hslash})_{\hslash \in [0,1]}$ and $({A_{\hslash}}^{\sim})_{\hslash \in [0,1]}$ are also continuous fields, where
	\begin{align*}
		A_{0}&=C_0(T^*{\mathbb{R}}^n;\mathrm{End}(V\oplus W)),\\
		A_{\hslash}&=\mathcal{K}(L^2({\mathbb{R}}^n;V\oplus W)),\ \hslash>0.
	\end{align*}
	
	\subsection{Equivariant Continuous Vector Field}
	\label{s2}
	Let $P:\mathcal{S}(\mathbb{R}^n;V) \mapsto \mathcal{S}(\mathbb{R}^n;W)$ be an equivariant elliptic pseudodifferential operator of order $m>0$. Denote by $a$ the symbol of operator $P$.
	We construct an equivariant continuous vector field of $({A_{\hslash}}^{\sim})_{\hslash \in [0,1]}$.
	For $\hslash >0$, let $P_{\hslash}$ be the elliptic operator with given symbol $a_{\hslash}(x,\xi):=a(x,\hslash \xi)$ and denote by $e_{\hslash}$ the graph projection of $P_{\hslash}$. Then $e_{\hslash}  \in \mathcal{K}(L^2({\mathbb{R}}^n; V\oplus W))^{\sim}$.
	
	For $\hslash =0$, consider the symbol $a$ as an $M_k$-valued $C^{\infty}$-function on $T^*{\mathbb{R}^n}$ and regard $a$ as the pointwise multiplication operator from $L^2(T^*{\mathbb{R}^n};V)$ to $L^2(T^*{\mathbb{R}^n};W)$. Denote by $e_a$ the graph projection of operator $a$. Since $a$ is the symbol of order $m>0$, by Section 3 of \cite{ENN2}, $e_a \in C_0(T^*{\mathbb{R}}^n;\mathrm{End}(V\oplus W))^{\sim}$. Set $e_0=e_a$. 
	\begin{theorem}
		\label{th2}
		The vector field $e=(e_{\hslash})_{\hslash \in [0,1]}$ of $({A_{\hslash}}^{\sim})_{\hslash \in [0,1]}$ is continuous.
	\end{theorem}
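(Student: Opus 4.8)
The plan is to establish continuity of the field $e=(e_\hslash)_{\hslash\in[0,1]}$ by showing that it is uniformly approximated by sections in the defining family $\Gamma$ of Section~\ref{s1}, and then to reduce the problem to the continuity of the fields $(\hslash \mapsto (1+P_\hslash^*P_\hslash)^{-1})$ and $(\hslash \mapsto P_\hslash(1+P_\hslash^*P_\hslash)^{-1})$, etc., at each entry of the $2\times 2$ matrix defining $e_\hslash$. The key observation is that each entry of $e_\hslash$ is a continuous function (indeed, analytic on $(0,\infty)$ and bounded near $0$) of the operator $P_\hslash$, and the symbol family $a_\hslash(x,\xi)=a(x,\hslash\xi)$ depends on $\hslash$ in a controlled way: as $\hslash\to 0$, $a_\hslash$ degenerates to the multiplication operator by $a(x,0)$, but more importantly the \emph{quantization map} $\rho_\hslash$ sends the symbol $a$ (viewed via its Fourier transform $\hat a$ in the appropriate sense) to $P_\hslash$, so that $e_\hslash$ is, up to the correction term $\bigl(\begin{smallmatrix}0&0\\0&1\end{smallmatrix}\bigr)$, of the form $\rho_\hslash(\widehat{F})$ for a single Schwartz-type function $F$ built from $a$.

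First I would recall from Section~3 of \cite{ENN2} the non-equivariant statement: for a scalar symbol $a$ of positive order the field of graph projections is continuous on $[0,1]$. Since $e_\hslash-\bigl(\begin{smallmatrix}0&0\\0&1\end{smallmatrix}\bigr)$ lies in $A_\hslash$ (not merely the unitization) — this is exactly the content of the computations preceding Theorem~\ref{th2} together with Section~4 of \cite{ENN2} for $\hslash>0$ and Section~3 for $\hslash=0$ — it suffices to verify that $\hslash\mapsto e_\hslash-\bigl(\begin{smallmatrix}0&0\\0&1\end{smallmatrix}\bigr)$ is a continuous section of $(A_\hslash)_{\hslash\in[0,1]}$. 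For the matrix-valued case one simply runs the scalar argument entry by entry, using that $M_k\bigl(\Gamma^m(\mathbb{R}^n\times\mathbb{R}^n)\bigr)$ is stable under the functional-calculus operations involved (parametrix construction for the elliptic symbol $a$, resolvent of $1+a^*a$, etc.), which is standard Shubin calculus (\cite{Sh}, §23--25): ellipticity of $a$ of order $m>0$ guarantees $(1+a_\hslash^*a_\hslash)^{-1}$ is a $\Psi$DO of order $-2m<0$ hence its quantization is compact, uniformly in $\hslash$ on compact subsets of $(0,1]$, and has a limit at $\hslash=0$.

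The core analytic step is the continuity \emph{at} $\hslash=0$. Here I would argue as in \cite{ENN2}: write each entry of $e_\hslash$ using the integral (Cauchy/Riesz) representation of the relevant function of $P_\hslash^*P_\hslash$, insert the explicit oscillatory-integral formula for $\rho_\hslash$, and perform the change of variables $y\mapsto \hslash y$ that turns $\rho_\hslash(\hat f)\phi(x)=\frac{1}{(2\pi)^n}\int \hat f(x,y,\hslash)\phi(x+\hslash y)\,dy$ into an expression converging, as $\hslash\to0$, to pointwise multiplication by $f(x,\xi,0)$ — precisely the behaviour encoded in the prescription $\rho_0(\hat f)(x,\xi)=f(x,\xi,0)$. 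The symbol estimates of Definition~\ref{def5}, uniform in $\hslash\in[0,1]$ since $a_\hslash=a(x,\hslash\xi)$ satisfies the order-$m$ bounds with constants independent of $\hslash\le1$, give the equicontinuity needed to pass to the limit in operator norm on $\mathcal{K}(L^2(\mathbb{R}^n;V\oplus W))$.

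I expect the main obstacle to be precisely this uniform control near $\hslash=0$: one must check that the parametrix/resolvent construction for the elliptic symbol $a_\hslash$ produces symbols whose $\Gamma^{m}$-seminorms are bounded uniformly in $\hslash\in[0,1]$ and that the resulting family of compact operators converges in norm, not merely strongly, to $e_0$. Once that is in hand, $G$-equivariance is essentially free: the group $G\le SO(n)$ acts on everything by the same unitary rotation in all fibres, $\rho_\hslash$ intertwines the $G$-actions (because $a$ is $G$-invariant and the rescaling $\xi\mapsto\hslash\xi$ commutes with $SO(n)$), so the section $e$ automatically takes values in the $G$-invariant subalgebras and its continuity as a section of $(A_\hslash^{\sim})$ is inherited from the ambient (non-equivariant) continuity. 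I would therefore structure the proof as: (i) reduce to $\hslash>0$ continuity plus continuity at $0$; (ii) cite the matrix version of Shubin calculus for the former; (iii) do the oscillatory-integral limit for the latter; (iv) remark that equivariance is preserved throughout.
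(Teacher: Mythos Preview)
The paper supplies no argument of its own for Theorem~\ref{th2}: it simply omits the proof and refers to Theorem~5.2 of \cite{ENN2}. Your proposal is essentially a reconstruction of that \cite{ENN2} argument (with the routine extension to matrix-valued symbols), so the approaches coincide; your remark that $G$-equivariance comes for free is also consistent with the paper, which does not mention $G$ at all in the proof and handles the equivariant refinement only later, in the approximation step of Section~\ref{s2}.

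One minor slip in your sketch is worth flagging: at $\hslash=0$ the limiting object is \emph{not} ``the multiplication operator by $a(x,0)$''. The fibre at $\hslash=0$ is $C_0(T^*\mathbb{R}^n;\mathrm{End}(V\oplus W))$, and $e_0=e_a$ is the graph projection of multiplication by the \emph{full} symbol $a(x,\xi)$ on $L^2(T^*\mathbb{R}^n)$; the prescription $\rho_0(\hat f)(x,\xi)=f(x,\xi,0)$ restricts in the Heisenberg parameter, not in $\xi$. You pivot to the correct picture in the next clause (``more importantly the quantization map $\rho_\hslash$\dots''), so this does not affect the overall plan, but the sentence as written is misleading. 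Relatedly, $a$ itself is not Schwartz, so $\rho_\hslash(\hat a)$ is not literally defined; the point is rather that the \emph{entries of} $e_\hslash-\bigl(\begin{smallmatrix}0&0\\0&1\end{smallmatrix}\bigr)$, being built from $(1+a^*a)^{-1}$ and its companions, have negative order and hence fall within reach of the $\rho_\hslash$ calculus and the continuous-field structure $\Gamma$.
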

	\begin{proof}
		Here we omit the proof. For details see proof of Theorem 5.2 of \cite{ENN2}.
	\end{proof}

	Denote by $\mathcal{K}^{\infty}$ (resp. $\mathcal{K}^{\infty}(L^2({\mathbb{R}}^n;V\oplus W))$) the subalgebra of integral operators with Schwartz kernels in  $\mathcal{S}({\mathbb{R}^n} \times {\mathbb{R}^n})$ (resp. $\mathcal{S}({\mathbb{R}^n} \times {\mathbb{R}^n};\mathrm{End}(V\oplus W))$).
	Then for $\hslash>0$,  $\{\rho_{\hslash}(\hat{f}) |f\in \mathcal{S}(\mathbb{R}^{2n+1};\mathrm{End}(V\oplus W))\}=\mathcal{K}^{\infty}(L^2({\mathbb{R}}^n;V\oplus W))$. In the case of $\hslash = 0$, $\{\rho_{0}(\hat{f}) |f\in \mathcal{S}(\mathbb{R}^{2n+1};\mathrm{End}(V\oplus W))\}=\mathcal{S}(T^*{\mathbb{R}}^n;\mathrm{End}(V\oplus W))$.

	Denote by ${(A_{\hslash}}^{\infty})_{\hslash \in [0,1]}$ the sub-family of $({A_{\hslash}})_{\hslash \in [0,1]}$, where
	\begin{align*}
		A_{0}^{\infty}&=\mathcal{S}(T^*{\mathbb{R}}^n;\mathrm{End}(V\oplus W)),\\
		A_{\hslash}^{\infty}&=\mathcal{K}^{\infty}(L^2({\mathbb{R}}^n;V\oplus W)),\ \hslash>0.
	\end{align*}
	Then ${(A_{\hslash}}^{\infty})_{\hslash \in [0,1]}$ is also a continuous field with continuous field structure $\Gamma$, which is given by
    \begin{align*}
    	\Gamma=\{\hslash \mapsto \rho_{\hslash}(\hat{f})\in A_{\hslash}^{\infty}|f\in \mathcal{S}({\mathbb{R}}^{2n+1};\mathrm{End}(V\oplus W))
    	\}.
    \end{align*}
    Furthermore, ${(A_{\hslash}}^{\infty})_{\hslash \in [0,1]}$ is the smooth subalgebra $\mathcal{S}^*(H_{2n+1})$ of $C^*(H_{2n+1})$. Since $\mathcal{S}^*(H_{2n+1})$ is closed under the holomorphic functional calculus in $C^*(H_{2n+1})$, for a given $0<\epsilon<1$, by Theorem \ref{th2}, there exists a continuous field $e^{\infty}=(e_{\hslash}^{\infty})_{\hslash \in [0,1]}\in \Gamma^{\sim}$ of projections such that 
    \begin{align*}
    	\left\| {e-e^{\infty}}\right\|=\sup\limits_{\hslash\in [0,1]}\left\| {e_{\hslash}-e^{\infty}_{\hslash}}\right\|_{\hslash}  <\epsilon,
    \end{align*}
    where $\left\|\cdot \right\|$ refers to sup-norm and $\left\| \cdot \right\|_{\hslash}$ denotes the norm of $A_{\hslash}^{\sim}$.
    Thus $e_{\hslash}$ is homotopic to $e^{\infty}_{\hslash}$ in $A_{\hslash}^{\sim}$ for $\hslash \in [0,1]$.
	Moreover, since $e=(e_{\hslash})_{\hslash \in [0,1]}$ is $G$-invariant, we can also choose $e^{\infty}$ to be $G$-invariant such that $e_{\hslash}$ is homotopic to $e^{\infty}_{\hslash}$ through a path of $G$-invariant projections in $A_{\hslash}^{\sim}$ for $\hslash \in [0,1]$, which implies that
	\begin{align*}
		[e_{\hslash}]-\left[\left(\begin{matrix}
			0 & 0\\
			0 & 1\\
		\end{matrix}
		\right)\right] = [e^{\infty}_{\hslash}]-\left[\left(\begin{matrix}
			0 & 0\\
			0 & 1\\
		\end{matrix}
		\right)\right] \in K_0\left(A_{\hslash}^G\right).
	\end{align*}
    In fact, suppose that \begin{align*}
    	e^{\infty}-\left(\begin{matrix}
    		0 & 0\\
    		0 & 1\\
    	\end{matrix}
    	\right)=\left(e_{\hslash}^{\infty}-\left(\begin{matrix}
    		0 & 0\\
    		0 & 1\\
    	\end{matrix}
    	\right)\right)_{\hslash \in [0,1]}=(\rho_{\hslash}(\hat{f_0}))_{\hslash \in [0,1]}
    \end{align*} for an $f_0\in \mathcal{S}({\mathbb{R}}^{2n+1};\mathrm{End}(V\oplus W))$, then by changing into \begin{align*}
    e^{\infty}-\left(\begin{matrix}
    	0 & 0\\
    	0 & 1\\
    \end{matrix}
    \right)=\left(e_{\hslash}^{\infty}-\left(\begin{matrix}
    	0 & 0\\
    	0 & 1\\
    \end{matrix}
    \right)\right)_{\hslash \in [0,1]}=(\rho_{\hslash}(\hat{f}))_{\hslash \in [0,1]},
    \end{align*} 
where
	\begin{align*}
		f=\int_G g.f_0dg,
	\end{align*}
	we obtain for all $\hslash\in [0,1]$, 
	\begin{align}
		\label{s10}
		\left\|e_{\hslash}-\left(\begin{matrix}
			0 & 0\\
			0 & 1\\
		\end{matrix}
		\right)-\rho_{\hslash}(\hat{f}) \right\|_{\hslash}&=\left\|e_{\hslash}-\left(\begin{matrix}
			0 & 0\\
			0 & 1\\
		\end{matrix}
		\right)-\int_G\rho_{\hslash}(\widehat{g.f_0})dg \right\|_{\hslash}.
	\end{align}
    We claim that $\rho_{\hslash}(\widehat{g.f_0})=g.\rho_{\hslash}(\widehat{f_0})$, then 
    \begin{align*}
		(\ref{s10})&=\left\|e_{\hslash}-\left(\begin{matrix}
			0 & 0\\
			0 & 1\\
		\end{matrix}
		\right)-\int_G g.\rho_{\hslash}(\widehat{f_0})dg \right\|_{\hslash}\\
		&=\left\|\int_G g.\left(e_{\hslash}-\left(\begin{matrix}
			0 & 0\\
			0 & 1\\
		\end{matrix}
		\right)- \rho_{\hslash}(\widehat{f_0})\right)dg \right\|_{\hslash}\\
		&\leq \left\| {e-e^{\infty}}\right\|  <\epsilon.
	\end{align*}
	As for the claim that $\forall k\in  SO(2n)$, $\forall f \in \mathcal{S}(T^*{\mathbb{R}}^n \times {\mathbb{R}})$, 
	\begin{align*}
		k.\rho_{\hslash}(\hat{f})=k|_{L^2(\mathbb{R}^n)}\rho_{\hslash}(\hat{f})k^{-1}|_{L^2(\mathbb{R}^n)}=\rho_{\hslash}(\widehat{k.f}),
	\end{align*}
	by direct computation, $\forall \phi \in L^2(\mathbb{R}^n)$, $\forall x\in \mathbb{R}^n$, we obtain
	\begin{align*}
		k|_{L^2(\mathbb{R}^n)}\rho_{\hslash}(\hat{f})k^{-1}|_{L^2(\mathbb{R}^n)}\phi(x)&=\frac{1}{{(2\pi)}^{n}}\int_{{\mathbb{R}^{n}}}\hat{f}(k^{-1}x,y,\hslash)(k^{-1}.\phi)(k^{-1}x+\hslash y)dy\\
		&=\frac{1}{{(2\pi)}^{n}}\int_{{\mathbb{R}^{n}}}\hat{f}(k^{-1}x,y,\hslash)\phi(x+\hslash ky)dy\\
		&=\frac{1}{{(2\pi)}^{n}}\int_{{\mathbb{R}^{n}}}\hat{f}(k^{-1}x,k^{-1}\overline{y},\hslash)\phi(x+\hslash \overline{y})d\overline{y}\\
		&=\frac{1}{{(2\pi)}^{n}}\int_{{\mathbb{R}^{n}}}\int_{{\mathbb{R}^{n}}}{f}(k^{-1}x,\xi,\hslash)e^{-i\langle k^{-1}\overline{y} , \xi  \rangle}\phi(x+\hslash \overline{y})d\xi d\overline{y},\\
		&=\frac{1}{{(2\pi)}^{n}}\int_{{\mathbb{R}^{n}}}\int_{{\mathbb{R}^{n}}}{f}(k^{-1}x,k^{-1}\overline{\xi},\hslash)e^{-i\langle k^{-1}\overline{y} , k^{-1}\overline{\xi}  \rangle}\phi(x+\hslash \overline{y})d\overline{\xi} d\overline{y}\\
		&=\frac{1}{{(2\pi)}^{n}}\int_{{\mathbb{R}^{n}}}\int_{{\mathbb{R}^{n}}}{f}(k^{-1}x,k^{-1}\overline{\xi},\hslash)e^{-i\langle \overline{y} , \overline{\xi}  \rangle}\phi(x+\hslash \overline{y})d\overline{\xi} d\overline{y}\\
		&=\frac{1}{{(2\pi)}^{n}}\int_{{\mathbb{R}^{n}}} \widehat{k.f}(x,\overline{y},\hslash)\phi(x+\hslash \overline{y}) d\overline{y}\\
		&=\rho_{\hslash}(\widehat{k.f})\phi(x).
	\end{align*}

	\subsection{A Field of Cyclic Cocycles on Any Sufficiently Smooth Field of Projections}
	
	\label{3.4}
	
	Let $P$ be a $\Psi$DO under the conditions of Section \ref{s2}.
	It is known that $\mathcal{K}^{\infty} \subseteq \mathcal{L}_1\left({L^2({\mathbb{R}}^n)}\right)$. Denote by $\mathrm{Tr}$ the restriction of the canonical trace of $\mathcal{K}\left({L^2({\mathbb{R}}^n)}\right)$ to $\mathcal{K}^{\infty}$.
    We consider the operator $\partial_{x_j}=\frac{\partial}{\partial{x_j}}$ and by abuse of notation, the pointwise multiplication operator $x_j$ by the coordinate function $x_j$ on $L^2({\mathbb{R}}^n)$, for $j=1, \cdots,n$. Denote by the induced derivations on $\mathcal{K}^{\infty}$ as follows,
    \begin{align*}
    	\delta_{2j-1}: \mathcal{K}^{\infty} &\to \mathcal{K}^{\infty}\\
    	S &\mapsto \delta_{2j-1}(S)=[\partial_{x_j}, S],
    \end{align*}
    \begin{align*}
    	\delta_{2j}: \mathcal{K}^{\infty} &\to \mathcal{K}^{\infty}\\
    	S &\mapsto \delta_{2j}(S)=[x_j, S],
    \end{align*}
    where $j=1, \cdots,n$. Naturally, the above derivations can be generalized to $\mathcal{K}^{\infty}(L^2(\mathbb{R}^n;V\oplus W))$.
    
    \begin{definition}
    	\label{def1}
    	For a fixed $g \in G$, suppose that $\mathrm{dim}\left({\mathbb{R}^n}\right)^g=n_g\ge0$, define the cyclic $2n_g$-cocycle on $\left(\mathcal{K}^{\infty}\right)^g$ as follows,
    \begin{align*}
    	\omega_g(T_0,\cdots,T_{2n_g})=  \frac{(-1)^{n_g}}{n_g!}\sum_{\sigma \in S_{2n_g}}{\mathrm{sgn}(\sigma)\mathrm{Tr}\left({g|_{L^2({\mathbb{R}}^n)}}T_0 \delta_{\sigma(1)}(T_1) \cdots \delta_{\sigma(2n_g)}(T_{2n_g}) \right)},
    \end{align*}
        where
     \begin{align*}
     	\left(\mathcal{K}^{\infty}\right)^g=\{T\in \mathcal{K}^{\infty}|\  g|_{L^2({\mathbb{R}}^n)}T=Tg|_{L^2({\mathbb{R}}^n)} \},
     \end{align*}
    and 
    \begin{align*}
    	T_0, \cdots, T_{2n_g} \in \left(\mathcal{K}^{\infty}\right)^g.
    \end{align*}
    Naturally, the definition of $\omega_g$ can be generalized to a cyclic $2n_g$-cocycle on $\left(\mathcal{K}^{\infty}(L^2(\mathbb{R}^n;V\oplus W))\right)^g$ as follows,
    \begin{align}
    	\label{s3}
    	\omega_g(T_0,\cdots,T_{2n_g})=  \frac{(-1)^{n_g}}{n_g!}\sum_{\sigma \in S_{2n_g}}{\mathrm{sgn}(\sigma)\mathrm{Tr}\left(\hat{g}T_0 \delta_{\sigma(1)}(T_1) \cdots \delta_{\sigma(2n_g)}(T_{2n_g}) \right)},
    \end{align}
    where
    \begin{align*}
    	\hat{g}=\left(\begin{matrix}
    		g|_{L^2({\mathbb{R}}^n;V)} & 0\\
    		0 &g|_{L^2({\mathbb{R}}^n;W)}
    	\end{matrix}\right),
    \end{align*}
    \begin{align*}
    	\left(\mathcal{K}^{\infty}(L^2(\mathbb{R}^n;V\oplus W))\right)^g=\{T\in \mathcal{K}^{\infty}(L^2(\mathbb{R}^n;V\oplus W))|\  \hat{g}T=T\hat{g} \},
    \end{align*}
    and
    \begin{align*}
    	T_0, \cdots, T_{2n_g} \in \left(\mathcal{K}^{\infty}(L^2(\mathbb{R}^n;V\oplus W))\right)^g.
    \end{align*}
    Here when $\mathrm{dim}\left({\mathbb{R}^n}\right)^g=n_g=0$, $\omega_g$ reduces to 
    \begin{align*}
    	\omega_g(T)=\mathrm{Tr}\left({g|_{L^2({\mathbb{R}}^n)}}T \right), \ T\in \left(\mathcal{K}^{\infty}\right)^g,
    \end{align*}
    and
    \begin{align*}
    	\omega_g(T)=\mathrm{Tr}\left(\hat{g}T \right), \ T\in \left(\mathcal{K}^{\infty}(L^2(\mathbb{R}^n;V\oplus W))\right)^g.
    \end{align*}
    \end{definition}

    The cyclic cocycle property for $\omega_g$ will be checked in Propsition \ref{prop5}.

    \begin{convention}
    	For a fixed $g\in G$, since $(\mathbb{R}^n)^g$ is a subspace, in this paper, we shall consider $(\mathbb{R}^n)^g$ as the subspace generated by $x_1,\cdots, x_{n_g}$, the first $n_g$-dimensional coordinate bases.
    \end{convention}
    The above convention derives exactly the following proposition by direct computation, which is needed in Proposition \ref{prop5}.
    
     \begin{proposition}
    	\label{prop4}
    	The action of $g\in G$ on $L^2(\mathbb{R}^n)$ commutes with operators $x_i$ and $\partial_{x_i}$, $i=1,\cdots, n_g$, that is,
    	\begin{align*}
    		g|_{L^2({\mathbb{R}}^n)}x_i=x_ig|_{L^2({\mathbb{R}}^n)},\ g|_{L^2({\mathbb{R}}^n)}{\partial_{x_i}}={\partial_{x_i}}g|_{L^2({\mathbb{R}}^n)},\ i=1,\cdots, n_g.
    	\end{align*}
    \end{proposition}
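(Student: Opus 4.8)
The plan is to reduce everything to the way $g$ acts on coordinates and then invoke the stated convention. Recall that $G\le SO(n)$ acts on $\mathbb{R}^n$ by isometries, so the induced action of $g$ on $L^2(\mathbb{R}^n)$ is the unitary $(g|_{L^2(\mathbb{R}^n)}\phi)(x)=\phi(g^{-1}x)$. By the Convention, we take $(\mathbb{R}^n)^g=\mathrm{span}\{x_1,\dots,x_{n_g}\}$, i.e. $g$ fixes the first $n_g$ coordinate vectors $e_1,\dots,e_{n_g}$ pointwise and leaves the orthogonal complement $\mathcal{N}(\mathbb{R}^n)^g$ invariant. Consequently $g^{-1}$ also fixes $e_1,\dots,e_{n_g}$, so for $i\le n_g$ the $i$-th coordinate of $g^{-1}x$ equals the $i$-th coordinate of $x$: $(g^{-1}x)_i=\langle g^{-1}x,e_i\rangle=\langle x,ge_i\rangle=\langle x,e_i\rangle=x_i$.

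First I would verify the commutation with the multiplication operators. For $i\le n_g$ and $\phi\in L^2(\mathbb{R}^n)$,
\begin{align*}
\bigl(g|_{L^2(\mathbb{R}^n)}\,x_i\,\phi\bigr)(x)=(x_i\phi)(g^{-1}x)=(g^{-1}x)_i\,\phi(g^{-1}x)=x_i\,\phi(g^{-1}x)=\bigl(x_i\,g|_{L^2(\mathbb{R}^n)}\,\phi\bigr)(x),
\end{align*}
using $(g^{-1}x)_i=x_i$ from the previous paragraph; this gives $g|_{L^2(\mathbb{R}^n)}x_i=x_i g|_{L^2(\mathbb{R}^n)}$. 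Second, I would handle the derivations $\partial_{x_i}$ for $i\le n_g$. The clean way is to differentiate the relation $g|_{L^2(\mathbb{R}^n)}\partial_{x_i}g^{-1}|_{L^2(\mathbb{R}^n)}=\sum_{j}(g^{-1})_{ji}\,\partial_{x_j}$, which follows from the chain rule: conjugating the vector field $\partial_{x_i}$ by the linear change of variables $x\mapsto g^{-1}x$ produces the vector field with coefficients given by the matrix of $g^{-1}$ applied to $e_i$. Since $g^{-1}e_i=e_i$ for $i\le n_g$, the right-hand side collapses to $\partial_{x_i}$, giving $g|_{L^2(\mathbb{R}^n)}\partial_{x_i}=\partial_{x_i}g|_{L^2(\mathbb{R}^n)}$. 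Alternatively, and even more elementarily, one can compute directly on $\phi\in\mathcal{S}(\mathbb{R}^n)$: $\bigl(g|_{L^2}\partial_{x_i}\phi\bigr)(x)=(\partial_{x_i}\phi)(g^{-1}x)$, while $\bigl(\partial_{x_i}(g|_{L^2}\phi)\bigr)(x)=\partial_{x_i}\bigl(\phi(g^{-1}x)\bigr)=\sum_j (g^{-1})_{ji}(\partial_{x_j}\phi)(g^{-1}x)$, and the two agree because $g^{-1}e_i=e_i$ forces $(g^{-1})_{ji}=\delta_{ji}$ for $i\le n_g$; then extend from $\mathcal{S}(\mathbb{R}^n)$ to the domain of $\partial_{x_i}$ by density/closedness.

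There is essentially no obstacle here: the only subtlety is that the identities for $\partial_{x_i}$ are statements about unbounded operators, so one should either phrase them as equalities of operators on the common core $\mathcal{S}(\mathbb{R}^n)$ (which suffices for all later uses, since the derivations $\delta_{2j-1},\delta_{2j}$ on $\mathcal{K}^\infty$ are defined via commutators that only ever act on Schwartz kernels), or note that both sides are closed extensions agreeing on a core. I would present the direct computation on $\mathcal{S}(\mathbb{R}^n)$ as the cleanest route, since that is exactly the setting in which Definition \ref{def1} uses these operators.
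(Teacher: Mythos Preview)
Your proof is correct and is exactly the ``direct computation'' the paper alludes to; the paper itself does not spell out any details beyond noting that the Convention (that $(\mathbb{R}^n)^g$ is spanned by the first $n_g$ coordinate vectors) makes this a routine verification. Your remark about unbounded operators is a fair caveat, but as you note it is harmless since everything is only used on $\mathcal{K}^\infty$ via commutators with Schwartz kernels.
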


    \begin{proposition}
    	\label{prop5}
    	For a fixed $g \in G$, $w_g$ defined in Definition \ref{def1} is a cyclic $2n_g$-cocycle on $\left(\mathcal{K}^{\infty}\right)^g$ (resp. $\left(\mathcal{K}^{\infty}(L^2(\mathbb{R}^n;V\oplus W))\right)^g$).
    \end{proposition}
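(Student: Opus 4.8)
The plan is to verify the two defining properties of a cyclic $2n_g$-cocycle for the functional $\omega_g$ of (\ref{s3}): the cyclic symmetry $\omega_g(T_1,\dots,T_{2n_g},T_0)=\omega_g(T_0,\dots,T_{2n_g})$ (the sign is $+1$ since $2n_g$ is even), and the Hochschild cocycle identity $b\omega_g=0$. I would treat the matrix-valued case (\ref{s3}); the scalar case is the special instance $V\oplus W=\mathbb{C}$, $\hat g=g|_{L^2(\mathbb{R}^n)}$, with an identical argument. First I would note that $\omega_g$ is well defined on the invariant subalgebra: since $\mathcal{K}^{\infty}(\cdots)$ is closed under each $\delta_j$ and consists of integral operators with Schwartz kernels, every product $\hat g\,T_0\,\delta_{\sigma(1)}(T_1)\cdots\delta_{\sigma(2n_g)}(T_{2n_g})$ occurring in (\ref{s3}) is trace class, and by Proposition \ref{prop4} each $\delta_j$ with $1\le j\le 2n_g$ preserves the $g$-invariant subalgebra, so the formula makes sense there.

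Next I would isolate three elementary facts. (i) The functional $\tau_g(T):=\mathrm{Tr}(\hat g T)$ is a \emph{trace} on the invariant subalgebra: for invariant $S,T$ one has $\mathrm{Tr}(\hat g ST)=\mathrm{Tr}(\hat g TS)$ because $\hat g T=T\hat g$ and $\mathrm{Tr}$ is cyclic. (ii) $\tau_g$ is \emph{closed} for each relevant derivation, i.e. $\tau_g(\delta_j(T))=0$: writing $\delta_j=[D_j,\,\cdot\,]$ with $D_j$ one of $\partial_{x_1},x_1,\dots,\partial_{x_{n_g}},x_{n_g}$, Proposition \ref{prop4} gives $[\hat g,D_j]=0$, hence $\mathrm{Tr}(\hat g D_jT)=\mathrm{Tr}(D_j\hat g T)=\mathrm{Tr}(\hat g TD_j)$ by cyclicity, so $\mathrm{Tr}(\hat g[D_j,T])=0$. (iii) The derivations $\delta_1,\dots,\delta_{2n_g}$ pairwise commute: by the Jacobi identity $[\delta_i,\delta_j]=\delta_{[D_i,D_j]}$, and $[D_i,D_j]$ is always a scalar (it is $\pm 1$ when $\{D_i,D_j\}$ is a conjugate pair $\{\partial_{x_k},x_k\}$ and $0$ otherwise), so the inner derivation $\delta_{[D_i,D_j]}$ vanishes.

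Given (i)--(iii), $\omega_g$ is exactly the cyclic $2n_g$-cocycle associated with the trace $\tau_g$ and the family $\delta_1,\dots,\delta_{2n_g}$ of pairwise commuting, $\tau_g$-closed derivations, by the standard construction (see Section 3 of \cite{ENN2} for the non-equivariant case, and \cite{Nest}). I would then check the two properties directly: cyclicity by moving $T_{2n_g}$ to the front via the trace property of $\tau_g$, ``integrating by parts'' the wrapped-around derivation using $\tau_g\circ\delta_k=0$, and relabelling the permutation; and $b\omega_g=0$ by expanding each $\omega_g(\dots,T_jT_{j+1},\dots)$ with the Leibniz rule $\delta_k(T_jT_{j+1})=\delta_k(T_j)T_{j+1}+T_j\delta_k(T_{j+1})$ and showing that antisymmetrization over $S_{2n_g}$, together with the trace property of $\tau_g$ used to reabsorb the wrap-around term $(-1)^{2n_g+1}\omega_g(T_{2n_g+1}T_0,\dots)$, cancels all resulting terms in pairs. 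The twist by $\hat g$ is harmless throughout: it is central for every operator that occurs, and $\tau_g$ retains both the trace and the closedness properties on the invariant subalgebra.

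The step I expect to be the main obstacle is the combinatorial bookkeeping in $b\omega_g=0$ --- pairing up the permutations and signs produced when a Leibniz term splits a product, and handling the wrap-around term. This is routine but lengthy; since it is formally identical to the untwisted computation once (i)--(iii) are in place, I would either carry it out following \cite{ENN2,Nest} or simply invoke that construction.
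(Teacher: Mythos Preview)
Your proposal is correct and follows essentially the same approach as the paper's own proof. The paper directly computes $b\omega_g=0$ via Leibniz-rule telescoping and verifies cyclicity by rewriting $\omega_g(T_0,\dots,T_{2n_g})-\omega_g(T_{2n_g},T_0,\dots,T_{2n_g-1})$ as a sum of terms that combine, using exactly your facts (i)--(iii), into $\tau_g(\delta_{\sigma(2n_g)}(\,\cdots\,))=0$; your explicit isolation of the trace property of $\tau_g$, its closedness under each $\delta_j$ (both consequences of Proposition~\ref{prop4}), and the pairwise commutativity of the $\delta_j$'s is a slightly cleaner packaging of the same computation.
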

    \begin{proof}
    	Without loss of generality, we just consider $\omega_g$ defined on $\left(\mathcal{K}^{\infty}\right)^g$. 
    	
    	For any $T_0, \cdots, T_{2n_g}, T_{2n_g+1} \in \left(\mathcal{K}^{\infty}\right)^g$, by direct computation, we obtain, 
    	\begin{equation*}
    		\begin{aligned}[t]
    			&b\omega_g(T_0,\cdots,T_{2n_g},T_{2n_g+1})\\
    			=&  \frac{(-1)^{n_g}}{n_g!}\sum_{\sigma \in S_{2n_g}}\mathrm{sgn}(\sigma)\left[{\mathrm{Tr}\left({g|_{L^2({\mathbb{R}}^n)}}T_0T_1 \delta_{\sigma(1)}(T_2) \cdots \delta_{\sigma(2n_g)}(T_{2n_g+1}) \right)}\right. \\ 
    			&\;+ \sum_{i=1}^{2n_g}{(-1)^i\mathrm{Tr}\left({g|_{L^2({\mathbb{R}}^n)}}T_0 \delta_{\sigma(1)}(T_1) \cdots\delta_{\sigma(i)}(T_iT_{i+1})\cdots \delta_{\sigma(2n_g)}(T_{2n_g}) \right)}\\
    			&\;-\left. {\mathrm{Tr}\left({g|_{L^2({\mathbb{R}}^n)}}T_{2n_g+1}T_0 \delta_{\sigma(1)}(T_1) \cdots \delta_{\sigma(2n_g)}(T_{2n_g}) \right)}\right]\\
    			=&\frac{(-1)^{n_g}}{n_g!}\sum_{\sigma \in S_{2n_g}}\mathrm{sgn}(\sigma)\left[\mathrm{Tr}\left({g|_{L^2({\mathbb{R}}^n)}}T_0 \delta_{\sigma(1)}(T_1) \cdots \delta_{\sigma(2n_g)}(T_{2n_g})T_{2n_g+1} \right) \right.\\ &-\left. {\mathrm{Tr}\left({g|_{L^2({\mathbb{R}}^n)}}T_{2n_g+1}T_0 \delta_{\sigma(1)}(T_1) \cdots \delta_{\sigma(2n_g)}(T_{2n_g}) \right)}\right]\\
    			=&0.
    		\end{aligned}
    	\end{equation*}
    Thus, $\omega_g$ is a $2n_g$-cocycle on $\left(\mathcal{K}^{\infty}\right)^g$. As for the cyclic property of $\omega_g$, for any $T_0, \cdots, T_{2n_g} \in \left(\mathcal{K}^{\infty}\right)^g$, we have,
    \begin{align}
    	\label{s6}
    	&\omega_g(T_0,T_{1}\cdots,T_{2n_g})-(-1)^{2n_g}\omega_g(T_{2n_g},T_0\cdots,T_{2n_g-1})\\
    	=&\frac{(-1)^{n_g}}{n_g!}\sum_{\sigma \in S_{2n_g}}{\mathrm{sgn}(\sigma)\mathrm{Tr}\left({g|_{L^2({\mathbb{R}}^n)}}T_0 \delta_{\sigma(1)}(T_1) \cdots \delta_{\sigma(2n_g)}(T_{2n_g}) \right)}\notag\\
    	&-\frac{(-1)^{n_g}}{n_g!}\sum_{\tau \in S_{2n_g}}{\mathrm{sgn}(\tau)\mathrm{Tr}\left({g|_{L^2({\mathbb{R}}^n)}}T_{2n_g} \delta_{\tau(1)}(T_0) \cdots \delta_{\tau(2n_g)}(T_{2n_g-1}) \right)}\notag,
    \end{align}
    then by Proposition \ref{prop4},
    \begin{align*}
    	(\ref{s6})=&\frac{(-1)^{n_g}}{n_g!}\sum_{\sigma \in S_{2n_g}}{\mathrm{sgn}(\sigma)\mathrm{Tr}\left({g|_{L^2({\mathbb{R}}^n)}}\delta_{\sigma(2n_g)}(T_{2n_g})T_0 \delta_{\sigma(1)}(T_1) \cdots \delta_{\sigma(2n_g-1)}(T_{2n_g-1}) \right)}\\
    	&+\frac{(-1)^{n_g}}{n_g!}\sum_{\sigma \in S_{2n_g}}{\mathrm{sgn}(\sigma)\mathrm{Tr}\left({g|_{L^2({\mathbb{R}}^n)}}T_{2n_g} \delta_{\sigma(2n_g)}(T_0)\delta_{\sigma(1)}(T_1) \cdots \delta_{\sigma(2n_g-1)}(T_{2n_g}-1) \right)}.
    \end{align*}
    By the fact that $\delta_{i}\delta_{j}=\delta_{j}\delta_{i}$, $j,k=1,\cdots, 2n_g$, we obtain,
    \begin{align*}
    	(\ref{s6})&=\frac{(-1)^{n_g}}{n_g!}\sum_{\sigma \in S_{2n_g}}{\mathrm{sgn}(\sigma)\mathrm{Tr}\left(\delta_{\sigma(2n_g)}\left({g|_{L^2({\mathbb{R}}^n)}}T_{2n_g}T_0 \delta_{\sigma(1)}(T_1) \cdots \delta_{\sigma(2n_g-1)}(T_{2n_g-1}) \right)\right)}\\
    	&=0.
    \end{align*}
    \end{proof}

    \begin{remark}
    	\label{remark1}
    	For a $2l$-cyclic cocyle $\phi$ on $C^*$-algebra $A$ and an idempotent $e\in M_k(A)$, where $M_k(A)$ denotes by $k\times k$ matrices over $A$, the pairing between the cyclic cohomology class $[\phi]$ and $[e] \in K_0(A)$ is given by, 
    	\begin{align*}
    		\langle [e], [\phi] \rangle = (2\pi i)^{-l}(l!)^{-1}\phi\#\mathrm{tr}(e,\cdots, e),
    	\end{align*}
        for more details see \cite{Connes}.
        From the above discussion, the pairing of $[\omega_g]$ with $K_0\left(\left(\mathcal{K}^{\infty}(L^2(\mathbb{R}^n;V\oplus W))\right)^g\right)$ is well defined. Since $\mathcal{K}^{\infty}(L^2(\mathbb{R}^n;V\oplus W))$ is dense in $\mathcal{K}(L^2(\mathbb{R}^n;V\oplus W))$ and stable under the holomorphic functional calculas, we also consider the pairing of $[\omega_g]$ with $K_0\left(\left(\mathcal{K}(L^2(\mathbb{R}^n;V\oplus W))\right)^g\right)$.
    	
    	
    	
    \end{remark}

	\begin{proposition}
		\label{prop1}
		\begin{itemize}
			\item [(1)] For any idempotent $T \in \left(\mathcal{K}^{\infty}\right)^g$, $\langle [T],[(2\pi i)^{n_g}(n_g)!\omega_g] \rangle=\mathrm{Tr}({g|_{L^2({\mathbb{R}}^n)}}T)$.
			\item [(2)]  For any idempotent $T \in \left(\mathcal{K}^{\infty}(L^2(\mathbb{R}^n;V\oplus W))\right)^g$, $\langle [T], [(2\pi i)^{n_g}(n_g)!\omega_g] \rangle=\mathrm{Tr}(\hat{g}T)$.
		\end{itemize}
	\end{proposition}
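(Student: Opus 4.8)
The plan is to reduce the pairing formula to a computation with the explicit Connes pairing formula from Remark~\ref{remark1}, and then to exploit the vanishing of traces of operators containing at least one commutator $\delta_i(S)$. Since $\omega_g$ is a $2n_g$-cyclic cocycle (Proposition~\ref{prop5}), the pairing with an idempotent $T$ is, by definition,
\begin{align*}
	\langle [T], [(2\pi i)^{n_g}(n_g)!\,\omega_g]\rangle = \omega_g\#\mathrm{tr}(T,\dots,T),
\end{align*}
where $T$ appears $2n_g+1$ times. Writing out $\omega_g\#\mathrm{tr}$ as the antisymmetrized sum over $\sigma\in S_{2n_g}$ of terms $\mathrm{sgn}(\sigma)\,\mathrm{Tr}(\hat g\,T\,\delta_{\sigma(1)}(T)\cdots\delta_{\sigma(2n_g)}(T))$ (with the matrix trace $\mathrm{tr}$ absorbed), the claim is that every such term with $n_g\ge 1$ vanishes, so that only the degree-zero part survives and yields $\mathrm{Tr}(\hat g T)$ (resp. $\mathrm{Tr}(g|_{L^2(\mathbb{R}^n)}T)$).

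The key computation is as follows. First I would use $T^2=T$ together with the Leibniz rule $\delta_i(T)=\delta_i(T^2)=\delta_i(T)T+T\delta_i(T)$ to rewrite, inside the trace, each factor. The standard trick (as in \cite{Connes}, \cite{ENN2}) is to observe that for an idempotent one has $T\delta_i(T)T=0$ and more generally the product $T\,\delta_{\sigma(1)}(T)\cdots\delta_{\sigma(2n_g)}(T)$ can be manipulated, using $\delta_i(T)=T\delta_i(T)+\delta_i(T)T$ repeatedly, into a form where one of the derivations is applied to the whole expression; since $\delta_i=[\partial_{x_i},\cdot]$ or $[x_i,\cdot]$ is an inner derivation and $\mathrm{Tr}(g|_{L^2}[\,\cdot\,,S])=0$ whenever $g|_{L^2}$ commutes with the relevant $x_i$ or $\partial_{x_i}$ — which it does for $i=1,\dots,n_g$ by Proposition~\ref{prop4} — the trace of a total derivative vanishes. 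Combined with the antisymmetrization over $S_{2n_g}$ and $\delta_i\delta_j=\delta_j\delta_i$, this forces the full degree-$2n_g$ term to be zero.

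For the degree-zero piece: when $n_g=0$, $\omega_g(T)=\mathrm{Tr}(\hat g T)$ by definition and there is nothing to prove. For $n_g\ge 1$, after discarding all the vanishing terms above, what remains of $\omega_g\#\mathrm{tr}(T,\dots,T)$ is precisely the contribution that collapses the string of $2n_g$ copies of $\delta(T)$-factors, by the idempotent identity, back to a single $\mathrm{Tr}(\hat g T^{2n_g+1})=\mathrm{Tr}(\hat g T)$, with the normalization constants $(2\pi i)^{-n_g}(n_g!)^{-1}$ in the Connes pairing cancelling the $(2\pi i)^{n_g}(n_g)!$ we inserted. I would carry this out first for $\left(\mathcal K^\infty\right)^g$ and then note the $V\oplus W$ case is identical after replacing $g|_{L^2(\mathbb{R}^n)}$ by $\hat g$ and $\mathrm{Tr}$ by $\mathrm{Tr}\otimes\mathrm{tr}$.

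The main obstacle I anticipate is bookkeeping: making the reduction of $\omega_g\#\mathrm{tr}(T,\dots,T)$ to $\mathrm{Tr}(\hat g T)$ rigorous requires carefully tracking how the idempotent identity and the Leibniz rule interact with the sign $\mathrm{sgn}(\sigma)$ under the sum over $S_{2n_g}$, and then correctly invoking Proposition~\ref{prop4} to kill the total-derivative terms (this is the only place the hypothesis $i\le n_g$, i.e. the Convention on $(\mathbb{R}^n)^g$, is used). Everything else — the normalization constants and the passage to the $V\oplus W$ setting — is routine.
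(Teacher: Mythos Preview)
Your proposal contains a genuine gap: the argument you sketch would prove the wrong thing. You claim that ``every such term with $n_g\ge 1$ vanishes'' by writing it as a total derivative, and then that a ``degree-zero part survives''. But $\omega_g(T,\dots,T)$ consists \emph{only} of terms with exactly $2n_g$ derivations; there is no degree-zero piece. If your total-derivative argument went through, you would conclude $\omega_g(T,\dots,T)=0$, not $\mathrm{Tr}(\hat g T)$. The idempotent identities $T\delta_i(T)T=0$ and $\mathrm{Tr}(\hat g\,\delta_i(\cdot))=0$ are useful, but by themselves they cannot produce a nonzero answer out of a sum of commutator-only terms. What actually generates the contribution $\mathrm{Tr}(\hat g T)$ is the canonical commutation relation $[\partial_{x_j},x_j]=1$: when you expand the antisymmetrized product and cycle under the trace, pairs $\delta_{2j-1},\delta_{2j}$ collapse via $[\partial_{x_j},x_j]=1$ to produce identity factors, lowering the number of derivations by two. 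You never invoke this relation, and without it there is no mechanism for the answer to be nonzero.

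The paper does not attempt a direct combinatorial reduction. Instead it verifies the base case $n_g=1$ by hand (where the single use of $[\partial_{x_1},x_1]=1$ is explicit) and then argues by induction on the degree using Connes' periodicity operator $S$: one constructs an explicit $(2m+1)$-cochain $\psi_{2m+1}$ with $b\psi_{2m+1}=-\omega_{2m+2}$ and $B\psi_{2m+1}$ a multiple of (a sum of copies of) $\omega_{2m}$, so that $S\omega_{2m}$ is a scalar multiple of $\omega_{2m+2}$ and the pairing is preserved up to a computable constant. This is the content of the Appendix (Proposition~\ref{prop6}), adapted to the $g$-twisted setting via Proposition~\ref{prop4}. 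If you want to salvage a direct computation, you must track how the CCR enters and how the signs and multiplicities from $S_{2n_g}$ conspire to give exactly $\mathrm{Tr}(\hat g T)$; this is essentially equivalent in difficulty to the inductive transgression argument the paper uses.
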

    
    \begin{proof}
    	Without loss of generality, we just prove the case of (1). 
    	For a fixed $g \in G$, observe that by the assumption that $T \in \left(\mathcal{K}^{\infty}\right)^g$ and Proposition \ref{prop4},
    	\begin{align*}
    		g|_{L^2({\mathbb{R}}^n)}T=Tg|_{L^2({\mathbb{R}}^n)},\ g|_{L^2({\mathbb{R}}^n)}\partial_{x_j}=\partial_{x_j}g|_{L^2({\mathbb{R}}^n)},\ g|_{L^2({\mathbb{R}}^n)}x_j=x_jg|_{L^2({\mathbb{R}}^n)},
    	\end{align*}
        for $j=1,\cdots,n_g$.
        Denote by a cyclic 2-cocycle
        \begin{align*}
        	\overline{\omega_g}(T_0,T_1,T_2)=-\sum_{\sigma \in S_{2}}{\mathrm{sgn}(\sigma)\mathrm{Tr}\left({g|_{L^2({\mathbb{R}}^n)}}T_0 \delta_{\sigma(1)}(T_1)  \delta_{\sigma(2)}(T_{2}) \right)},
        \end{align*}
        where
        \begin{align*}
        	T_0,T_1,T_2 \in \left(\mathcal{K}^{\infty}\right)^g.
        \end{align*}
        By direct computation,
        \begin{align*}
        	\overline{\omega_g}(T,T,T)=\mathrm{Tr}({g|_{L^2({\mathbb{R}}^n)}}T).
        \end{align*}
     Then similarly to the proof of Proposition \ref{prop6} (see Appendix), the conclusion of (1) can be derived by induction.
    \end{proof}

    Now we continue with the mapping (\ref{map2}) and turn to the equivariant index of $P$. Since $P$ is an elliptic $\Psi$DO of positive order, the kernel of $P$ in $L^2(\mathbb{R}^n; V)$ is generated by finitely many rapidly decreasing functions, which implies that $\mathrm{Ker}P \in \left(\mathcal{K}^{\infty}(L^2(\mathbb{R}^n;V))\right)^G$. Similarly, $\mathrm{Ker}P^*\in \left(\mathcal{K}^{\infty}(L^2(\mathbb{R}^n;W))\right)^G$. By Propsition \ref{prop1} and Propsition \ref{prop7}, we obtain 
    \begin{align}
    	\label{map4}
    	\mathrm{ind}_G(P):\  G &\to \mathbb{C}\\
    	g &\mapsto
    	\begin{aligned}[t]
    		\mathrm{ind}_{(g)}(P)&=\mathrm{Tr}(g|_{L^2({\mathbb{R}}^n; V)}\mathrm{Ker}P)-\mathrm{Tr}(g|_{L^2({\mathbb{R}}^n; W)}\mathrm{Ker}P^*)\\
    		&=\langle \left[\mathrm{Ker}P\right]-[\mathrm{Ker}P^*],[(2\pi i)^{n_g}(n_g)!\omega_g] \rangle\\
    		&=\langle \left[e_{1}\right]-\left[\left(\begin{matrix}
    			0 & 0\\
    			0 & 1\\
    		\end{matrix}
    		\right)\right],[(2\pi i)^{n_g}(n_g)!\omega_g] \rangle\\
    		&=\langle \left[e^{\infty}_{1}\right]-\left[\left(\begin{matrix}
    			0 & 0\\
    			0 & 1\\
    		\end{matrix}
    		\right)\right],[(2\pi i)^{n_g}(n_g)!\omega_g] \rangle.
    		\end{aligned}\notag
    	\end{align}
    Then as the map $\hslash>0 \mapsto e_{\hslash}^{\infty}$ is norm continuous, 
    \begin{align*}
    		\mathrm{ind}_{(g)}(P)=\langle [e^{\infty}_{\hslash}]-\left[\left(\begin{matrix}
    			0 & 0\\
    			0 & 1\\
    		\end{matrix}
    		\right)\right],[(2\pi i)^{n_g}(n_g)!\omega_g] \rangle\ \mathrm{for\  all\  \hslash>0}.
    \end{align*}

	Next we show that as $\hslash \to 0$, the pairing of the last equality of the mapping (\ref{map4}) converges to the pairing on $\left(\mathcal{S}(T^*{\mathbb{R}}^n;\mathrm{End}(V\oplus W))\right)^g$.
	To see that, we shall firstly define a $g$-twisted cyclic cocycle on it.
	\begin{definition}
		\label{def2}
		For a fixed $g \in G$, suppose that $\mathrm{dim}\left({\mathbb{R}^n}\right)^g=n_g>0$, define a cyclic $2n_g$-cocycle on $\left(\mathcal{S}(T^*{\mathbb{R}}^n)\right)^g$
		as follows,
		\begin{align*}
			\epsilon_g(f_0,\cdots,f_{2n_g})
			&=\frac{1}{(2\pi i)^{n_g}n_g!{\mathrm{det}(g-1)}}
			\int_{T^*(\mathbb{R}^n)^{g}}f_0 d{f_1}\cdots d{ f_{2n_g}},
		\end{align*}
	where
	\begin{align*}
		\left(\mathcal{S}(T^*{\mathbb{R}}^n)\right)^g=\{f\in \mathcal{S}(T^*{\mathbb{R}}^n)| (g.f)(x,\xi)
		=f(g^{-1}x,g^{-1}\xi)=f(x,\xi),\ \forall (x,\xi)\in T^*{\mathbb{R}}^n\},
	\end{align*}
   and
   \begin{align*}
   	f_0,\cdots,f_{2n_g} \in \left(\mathcal{S}(T^*{\mathbb{R}}^n)\right)^g.
   \end{align*}

    Naturally, the definition of $\omega_g$ can be generalized to a cyclic $2n_g$-cocycle on $\left(\mathcal{S}(T^*{\mathbb{R}}^n;\mathrm{End}(V\oplus W))\right)^g$ as follows,
    \begin{align*}
    	\epsilon_g(f_0,\cdots,f_{2n_g})
    	&=\frac{1}{(2\pi i)^{n_g}n_g!{\mathrm{det}(g-1)}}
    	\int_{T^*(\mathbb{R}^n)^{g}}\mathrm{tr}\left[\left(\begin{matrix}
    		g^V & 0\\
    		0 & g^W\\
    	\end{matrix} \right) \left(f_0 d{f_1}\cdots d{ f_{2n_g}} \right)\right],
    \end{align*}
    where
    \begin{align*}
    	\left(\mathcal{S}(T^*{\mathbb{R}}^n;\mathrm{End}(V\oplus W))\right)^g=\{f\in \mathcal{S}(T^*{\mathbb{R}}^n;\mathrm{End}(V\oplus W))| &(g.f)(x,\xi)\\
    	=&\left(\begin{matrix}
    		g^V & 0\\
    		0 & g^W\\
    	\end{matrix} \right)f(g^{-1}x,g^{-1}\xi)\left(\begin{matrix}
    	g^V & 0\\
    	0 & g^W\\
    \end{matrix} \right)^{-1}\\=&f(x,\xi),\ \forall (x,\xi)\in T^*{\mathbb{R}}^n \},
    \end{align*}
    and
    \begin{align*}
    	f_0,\cdots,f_{2n_g} \in \left(\mathcal{S}(T^*{\mathbb{R}}^n;\mathrm{End}(V\oplus W))\right)^g.
    \end{align*}
	\end{definition}
	
	\begin{definition}
		\label{def3}
		For a fixed $g\in G$,
		if $g$ has only isolated fixed points on $\mathbb{R}^n$, that is, ${(\mathbb{R}^n)}^g=\{0\}$.
		Then define a $g$-twisted trace on $\left(\mathcal{S}(T^*{\mathbb{R}}^n)\right)^g$
		as follows,
		\begin{align*}
			\epsilon_g(f)
			&=\frac{1}{(2\pi i)^{n_g}n_g!{\mathrm{det}(g-1)}}f(0,0),
		\end{align*}
		where $\left(\mathcal{S}(T^*{\mathbb{R}}^n)\right)^g$ is defined as above and $f \in \left(\mathcal{S}(T^*{\mathbb{R}}^n)\right)^g$.
		
	Naturally, the definition of $\omega_g$ can be generalized to a $g$-twisted trace on $\left(\mathcal{S}(T^*{\mathbb{R}}^n;\mathrm{End}(V\oplus W))\right)^g$ as follows,
	\begin{align*}
		\epsilon_g(f)
		&=\frac{1}{(2\pi i)^{n_g}n_g!{\mathrm{det}(g-1)}} \mathrm{tr}\left[\left(\begin{matrix}
			g^V & 0\\
			0 & g^W\\
		\end{matrix} \right) f(0,0)\right],
	\end{align*}
	where $\left(\mathcal{S}(T^*{\mathbb{R}}^n;\mathrm{End}(V\oplus W))\right)^g$ is defined as above and $f \in \left(\mathcal{S}(T^*{\mathbb{R}}^n;\mathrm{End}(V\oplus W))\right)^g$.
	\end{definition}

	\begin{proposition}
		For a fixed $g\in G$, $\epsilon_g$ defined in Definiton \ref{def2} is a cyclic $2n_g$-cocycle on $\left(\mathcal{S}(T^*{\mathbb{R}}^n;\mathrm{End}(V\oplus W))\right)^g$.	\end{proposition}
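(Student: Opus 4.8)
The plan is to show that $\epsilon_g$ satisfies the two defining identities of a cyclic cocycle: the cyclicity relation $\epsilon_g(f_0,\dots,f_{2n_g}) = \epsilon_g(f_{2n_g},f_0,\dots,f_{2n_g-1})$ (the sign $(-1)^{2n_g}=1$ is harmless), and the Hochschild coboundary relation $b\epsilon_g = 0$. Since $\epsilon_g$ is, up to the fixed scalar factor $\tfrac{1}{(2\pi i)^{n_g}n_g!\det(g-1)}$, obtained by integrating the de Rham form $\mathrm{tr}[\,\widehat{g}\, f_0\,df_1\wedge\cdots\wedge df_{2n_g}]$ over the $2n_g$-dimensional oriented manifold $T^*(\mathbb{R}^n)^g$, both identities reduce to standard facts about the Connes cocycle associated with a closed current; I would simply verify them directly in this concrete setting so the paper is self-contained.

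First I would establish cyclicity. Writing $\alpha = \mathrm{tr}[\widehat{g}\, f_{2n_g}\,df_0\wedge\cdots\wedge df_{2n_g-1}]$ and comparing with $\beta = \mathrm{tr}[\widehat{g}\, f_0\,df_1\wedge\cdots\wedge df_{2n_g}]$, the Leibniz rule gives $d\big(\mathrm{tr}[\widehat{g}\, f_{2n_g}f_0\,df_1\wedge\cdots\wedge df_{2n_g-1}]\big) = \beta' + \alpha'$ up to a sign bookkeeping that produces exactly $(-1)^{2n_g-1}$; here I use that $\mathrm{tr}$ is a trace and that $\widehat{g}$ commutes past the $G$-invariant arguments in the appropriate graded sense. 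Since $T^*(\mathbb{R}^n)^g$ has no boundary and each $f_i$ is Schwartz (so all forms are compactly supported enough for Stokes), $\int_{T^*(\mathbb{R}^n)^g} d(\cdots) = 0$, which yields $\epsilon_g(f_0,\dots,f_{2n_g}) - \epsilon_g(f_{2n_g},f_0,\dots,f_{2n_g-1}) = 0$.

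Next, for $b\epsilon_g = 0$, I would expand
\[
b\epsilon_g(f_0,\dots,f_{2n_g+1}) = \epsilon_g(f_0 f_1, f_2,\dots) + \sum_{i=1}^{2n_g}(-1)^i \epsilon_g(f_0,\dots,f_i f_{i+1},\dots) + (-1)^{2n_g+1}\epsilon_g(f_{2n_g+1}f_0, f_1,\dots).
\]
Inside the integral, the graded Leibniz rule $d(f_i f_{i+1}) = (df_i) f_{i+1} + f_i (df_{i+1})$ makes almost all terms telescope and cancel in pairs, exactly as in the commutative-algebra computation of $b$ of the fundamental class; the last term combines with the surviving piece of the first using the trace property of $\mathrm{tr}$ and the fact that $\widehat{g}$ is central among the invariant $f_i$. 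I would present this cancellation compactly rather than writing all $2n_g+2$ terms. The matrix-valued case is identical once $\int f_0\,df_1\cdots$ is replaced by $\int \mathrm{tr}[\widehat g\, f_0\,df_1\cdots]$ and one notes $g.f_i = \widehat g\, f_i\, \widehat g^{-1}$ forces $\widehat g$ to commute with each $f_i$ only after the trace, which is all that is used.

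The main obstacle is purely notational: keeping track of the Koszul signs when moving $d$ and $\widehat g$ through a product of $2n_g+1$ (or $+2$) one-forms and zero-forms, and confirming that the orientation convention on $T^*(\mathbb{R}^n)^g$ is consistent with the sign in front of $\epsilon_g$ so that Stokes' theorem produces vanishing (rather than a spurious boundary term at infinity, which is excluded by the Schwartz condition). For the isolated-fixed-point case of Definition \ref{def3}, $n_g=0$ and $\epsilon_g$ is a scalar multiple of evaluation at $0$, i.e.\ $f\mapsto c\,\mathrm{tr}[\widehat g f(0,0)]$; that this is a trace ($0$-cocycle) on the invariant algebra is immediate from $\widehat g$ being central there, so no separate argument is needed.
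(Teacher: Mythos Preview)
Your proposal is correct and amounts to the same direct verification the paper has in mind: the paper's own proof is a one-line reference (``Based on Proposition~\ref{prop4}, the conclusion can be checked by direct computation similar to the proof of Proposition~\ref{prop5}''), and your Stokes/Leibniz argument is precisely that computation, written in the language of differential forms rather than the derivation formalism used for $\omega_g$. One small sharpening: you say $\widehat g$ commutes with each $f_i$ ``only after the trace,'' but in fact on the fixed-point set $T^*(\mathbb{R}^n)^g$ the invariance condition forces $\widehat g$ to commute with $f_i$ and with the tangential derivatives of $f_i$ \emph{pointwise} (this is where Proposition~\ref{prop4} enters), and that pointwise commutation is what makes the cyclic permutation under $\mathrm{tr}$ work cleanly.
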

	\begin{proof}
		Based on Proposition \ref{prop4}, the conclusion can be checked by direct computation similar to the proof of Proposition \ref{prop5}.
	\end{proof}
	Refers to Remark \ref{remark1} for the pairing between the cyclic cohomology class $[\epsilon_g]$ and $K_0\left(\left(\mathcal{S}(T^*{\mathbb{R}}^n)\right)^g\right)\cong K_0\left(\left(C_0(T^*{\mathbb{R}}^n)\right)^g\right)$ (resp. $K_0\left(\left(\mathcal{S}(T^*{\mathbb{R}}^n;\mathrm{End}(V\oplus W))\right)^g\right)\cong K_0\left(\left(C_0(T^*{\mathbb{R}}^n;\mathrm{End}(V\oplus W))\right)^g\right)$)  .
	
	Before discussing the convergence of the last equality of the mapping (\ref{map4}), we also need the following observations, which are some kind of generalization of ones in \cite{ENN2}.

	For $f_1,f_2\in \mathcal{S}(T^*{\mathbb{R}}^n \times {\mathbb{R}};\mathrm{End}(V\oplus W))$, we define $\hat{f_1} \ast_H \hat{f_2} \in \mathcal{S}(T^*{\mathbb{R}}^n \times {\mathbb{R}};\mathrm{End}(V\oplus W))$ by
	\begin{align*}
		(\hat{f_1} \ast_H \hat{f_2})(x,y,\hslash)=\left\{\begin{aligned}
			&(2\pi)^{-n}\int_{\mathbb{R}^n}\widehat{f_1}(x,z,\hslash)\widehat{f_2}(x+\hslash z,y-z,\hslash)dz, &\  \hslash \neq 0,\\
			&\widehat{f_1f_2}(x,y,0), &\  \hslash=0,
		\end{aligned}
		\right.
	\end{align*}
	where $f_1f_2$ stands for the product of $f_1$ and $f_2$ in $\mathcal{S}(T^*{\mathbb{R}}^n;\mathrm{End}(V\oplus W))$, $x\in \mathbb{R}^n$, $y\in \mathbb{R}^n$ and $\hslash \in \mathbb{R}$.
	We consider the case when $\hslash \in [0,1]$.

	\begin{proposition}
		\label{prop3}
		\begin{itemize}
			\item [(1)] For $f\in \mathcal{S}(T^*{\mathbb{R}}^n \times {\mathbb{R}};\mathrm{End}(V\oplus W))$,
			\begin{align*}
				\delta_{2j-1}(\rho_{\hslash}(\hat{f}))&=[\partial_{x_j}, \rho_{\hslash}(\hat{f})]=[\partial_{x_j}, \rho_{\hslash}(\hat{f})]=\rho_{\hslash}\left(\widehat{\frac{\partial f}{\partial x_j}} \right),\ j=1,\cdots,n,\\
				\delta_{2j}(\rho_{\hslash}(\hat{f}))&=[x_j, \rho_{\hslash}(\hat{f})]=[x_j, \rho_{\hslash}(\hat{f})]=\left(-\frac{\hslash}{i}\right)\rho_{\hslash}\left(\widehat{\frac{\partial f}{\partial \xi_j}} \right),\ j=1,\cdots,n.
			\end{align*}
		\item [(2)] For $\hslash>0$, $f_1,f_2\in \mathcal{S}(T^*{\mathbb{R}}^n \times {\mathbb{R}};\mathrm{End}(V\oplus W))$,
		\begin{align*}
			\rho_{\hslash}(\hat{f_1})\rho_{\hslash}(\hat{f_2})=\rho_{\hslash}(\hat{f_1} \ast_H \hat{f_2}).
		\end{align*}
		\item [(3)] For $f\in \mathcal{S}(T^*{\mathbb{R}}^n \times {\mathbb{R}};\mathrm{End}(V\oplus W))$, $\forall g\in G$,
	    \begin{align*}
	    	\mathrm{Tr}(\rho_{\hslash}(\hat{f})\hat{g})=\frac{1}{{\hslash}^n(2\pi)^n}\int_{\mathbb{R}^n}\mathrm{tr}\left[\left(\begin{matrix}
	    		g^V & 0\\
	    		0 & g^W\\
	    	\end{matrix} \right)\hat{f}(x,\frac{gx-x}{\hslash},\hslash)\right] dx.
	    \end{align*}
		\end{itemize}
	\end{proposition}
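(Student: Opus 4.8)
The plan is to prove all three parts by direct computation from the defining formula
\[
\rho_{\hslash}(\hat f)\phi(x)=\frac{1}{(2\pi)^{n}}\int_{\mathbb{R}^{n}}\hat f(x,y,\hslash)\,\phi(x+\hslash y)\,dy,\qquad \hslash>0,
\]
keeping in mind that $f\in\mathcal{S}(T^{*}\mathbb{R}^{n}\times\mathbb{R};\mathrm{End}(V\oplus W))$, so that differentiation under the integral sign, one integration by parts in $\xi$, and the changes of variables used below are all legitimate, and $\rho_{\hslash}(\hat f)$ has a Schwartz integral kernel, hence is trace class.

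For (1), I would apply $\partial_{x_{j}}$ to $\rho_{\hslash}(\hat f)\phi(x)$ and use the Leibniz rule: the term in which $\partial_{x_{j}}$ hits $\hat f(x,y,\hslash)$ equals $\rho_{\hslash}\!\left(\widehat{\partial f/\partial x_{j}}\right)\phi(x)$, because the transform $\hat{\ }$ is taken only in the $\xi$-variable and therefore commutes with $\partial_{x_{j}}$, while the term in which it hits $\phi(x+\hslash y)$ is exactly $\rho_{\hslash}(\hat f)(\partial_{x_{j}}\phi)(x)$; subtracting gives the first identity. For the second, a short computation gives $x_{j}\rho_{\hslash}(\hat f)\phi(x)-\rho_{\hslash}(\hat f)(x_{j}\phi)(x)=-\hslash(2\pi)^{-n}\int y_{j}\hat f(x,y,\hslash)\phi(x+\hslash y)\,dy$, and then the identity $y_{j}e^{-i\langle y,\xi\rangle}=i\,\partial_{\xi_{j}}e^{-i\langle y,\xi\rangle}$ followed by one integration by parts in $\xi$ turns $y_{j}\hat f$ into $-i\,\widehat{\partial f/\partial \xi_{j}}$; collecting the scalars $-\hslash\cdot(-i)=-\hslash/i$ yields the claim.

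For (2), I would substitute the defining formula twice, so that $\rho_{\hslash}(\hat f_{1})\rho_{\hslash}(\hat f_{2})\phi(x)$ becomes $(2\pi)^{-2n}\iint \hat f_{1}(x,z,\hslash)\,\hat f_{2}(x+\hslash z,w,\hslash)\,\phi(x+\hslash z+\hslash w)\,dw\,dz$; the change of variables $w=y-z$ (with $z$ fixed) rewrites this as $(2\pi)^{-n}\int\big[(2\pi)^{-n}\int\hat f_{1}(x,z,\hslash)\hat f_{2}(x+\hslash z,y-z,\hslash)\,dz\big]\phi(x+\hslash y)\,dy$, whose bracket is precisely $(\hat f_{1}\ast_{H}\hat f_{2})(x,y,\hslash)$, and the non-commutative order of the matrix product is preserved throughout. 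For (3), I would first read off the Schwartz kernel of $\rho_{\hslash}(\hat f)$: the substitution $x'=x+\hslash y$ (Jacobian $\hslash^{-n}$, $\hslash>0$) gives $\rho_{\hslash}(\hat f)\phi(x)=\int K(x,x')\phi(x')\,dx'$ with $K(x,x')=\hslash^{-n}(2\pi)^{-n}\hat f\!\left(x,\tfrac{x'-x}{\hslash},\hslash\right)$; since $\hat g\phi(x)=\diag(g^{V},g^{W})\,\phi(g^{-1}x)$ and $g$ is orthogonal (so $x'\mapsto g^{-1}x'$ has Jacobian $1$), the composite $\rho_{\hslash}(\hat f)\hat g$ has kernel $(x,x')\mapsto K(x,gx')\diag(g^{V},g^{W})$. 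As $\rho_{\hslash}(\hat f)$ is trace class with continuous kernel, $\mathrm{Tr}(\rho_{\hslash}(\hat f)\hat g)$ is the integral over the diagonal $x'=x$ of the pointwise trace of this kernel, and cyclicity of $\mathrm{tr}$ on $\mathrm{End}(V\oplus W)$ moves $\diag(g^{V},g^{W})$ to the front, giving the stated formula.

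All of this is essentially bookkeeping, and I do not expect a genuine obstacle; the only points that need a little care are the Fourier identity together with the single integration by parts in part (1), the substitution $w=y-z$ in part (2), and, in part (3), invoking the standard fact that a trace-class integral operator with continuous (here Schwartz) kernel has trace equal to the integral of its kernel along the diagonal, while keeping track of the Jacobian $\hslash^{-n}$ and using $\det g=1$. The main thing to watch is carrying the $\mathrm{End}(V\oplus W)$-valued computation through consistently with the conventions fixed earlier and with the precise definition of $\ast_{H}$.
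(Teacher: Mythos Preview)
Your proposal is correct and follows essentially the same direct-computation approach as the paper: for (1) the paper carries out the identical Leibniz-rule and $y_{j}\hat f$ calculations (without spelling out the integration by parts), for (2) the paper simply omits the computation you describe, and for (3) the paper reaches the same kernel by doing the combined substitution $\overline{y}=g^{-1}(x+\hslash y)$ in one step rather than first extracting $K(x,x')$ and then composing with $\hat g$. These are only minor differences in organization, not in method.
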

	\begin{proof}
    \begin{itemize}
    	\item [(1)] We only need to verify that $\forall f\in \mathcal{S}(T^*{\mathbb{R}}^n \times {\mathbb{R}})$, $\forall j\in \left\{1,\cdots,n \right\}$,
    	\begin{align*}
    		\delta_{2j-1}(\rho_{\hslash}(\hat{f}))=\rho_{\hslash}\left(\widehat{\frac{\partial f}{\partial x_j}} \right),\ \delta_{2j}(\rho_{\hslash}(\hat{f}))=\left(-\frac{\hslash}{i}\right)\rho_{\hslash}\left(\widehat{\frac{\partial f}{\partial \xi_j}} \right).
    	\end{align*}
    Since $\hat{f}$ is the Fourier transform of $f$ with respect to the second variable, by direct computation,  $\forall \phi \in L^2(\mathbb{R}^n)$, $\forall x\in \mathbb{R}^n$,
    \begin{align*}
    	\delta_{2j-1}(\rho_{\hslash}(\hat{f}))\phi(x)&=(\partial_{x_j}\rho_{\hslash}(\hat{f})-\rho_{\hslash}(\hat{f})\partial_{x_j})\phi(x)\\
    	&=\frac{1}{{(2\pi)}^{n}}\int_{{\mathbb{R}^{n}}} \partial_{x_j}\left(\hat{f}(x,y,\hslash)\phi(x+\hslash y)\right) dy-\frac{1}{{(2\pi)}^{n}}\int_{{\mathbb{R}^{n}}} \hat{f}(x,y,\hslash)(\partial_{x_j}\phi)(x+\hslash y) dy\\
    	&=\frac{1}{{(2\pi)}^{n}}\int_{{\mathbb{R}^{n}}}\left(\widehat{\frac{\partial f}{\partial x_j}}(x,y,\hslash)\phi(x+\hslash y)\right) dy\\
    	&=\rho_{\hslash}\left(\widehat{\frac{\partial f}{\partial x_j}} \right)\phi(x),\\
    	\delta_{2j}(\rho_{\hslash}(\hat{f}))\phi(x)&=( x_j\rho_{\hslash}(\hat{f})-\rho_{\hslash}(\hat{f}) x_j)\phi(x)\\
    	&=\frac{1}{{(2\pi)}^{n}}(-\hslash)\int_{{\mathbb{R}^{n}}} y_j\hat{f}(x,y,\hslash)\phi(x+\hslash y)dy\\
    	&=\left(-\frac{\hslash}{i}\right)\rho_{\hslash}\left(\widehat{\frac{\partial f}{\partial \xi_j}} \right)\phi(x).
    \end{align*}
    \item [(2)] 
    The proof follows from direct computation. Hence we omit it.
    
    \item [(3)] For $\phi \in L^2(\mathbb{R}^n; V\oplus W)$ and $x\in \mathbb{R}^n$,
    \begin{align}
    	\label{s7}
    	(\rho_{\hslash}(\hat{f})\hat{g})\phi(x)&=\frac{1}{(2\pi)^n}\int_{\mathbb{R}^n}\hat{f}(x,y,\hslash)(\hat{g}.\phi)(x+\hslash y)dy\notag\\
    	&=\frac{1}{(2\pi)^n}\int_{\mathbb{R}^n}\hat{f}(x,y,\hslash)\left(\begin{matrix}
    		g^V & 0\\
    		0 & g^W\\
    	\end{matrix} \right)\phi(g^{-1}x+\hslash g^{-1}y)dy\notag\\
    	&=\frac{\mathrm{det}(g)}{{\hslash}^n(2\pi)^n}\int_{\mathbb{R}^n}\hat{f}(x,\frac{g\overline{y}-x}{\hslash},\hslash)\left(\begin{matrix}
    		g^V & 0\\
    		0 & g^W\\
    	\end{matrix} \right)\phi(\overline{y})d\overline{y}.
    \end{align}
    Since $g\in G$ and $G \leq SO(n)$,
    \begin{align*}
    	(\ref{s7})=\frac{1}{{\hslash}^n(2\pi)^n}\int_{\mathbb{R}^n}\hat{f}(x,\frac{g\overline{y}-x}{\hslash},\hslash)\left(\begin{matrix}
    		g^V & 0\\
    		0 & g^W\\
    	\end{matrix} \right)\phi(\overline{y})d\overline{y}.
    \end{align*}
    Moreover, as $\hat{f}$ is smooth, 
    \begin{align*}
    	\mathrm{Tr}(\rho_{\hslash}(\hat{f})\hat{g})=\frac{1}{{\hslash}^n(2\pi)^n}\int_{\mathbb{R}^n}\mathrm{tr}\left[\left(\begin{matrix}
    		g^V & 0\\
    		0 & g^W\\
    	\end{matrix} \right)\hat{f}(x,\frac{gx-x}{\hslash},\hslash)\right] dx.
    \end{align*}
    \end{itemize}		
	\end{proof}

	\begin{lemma}
		\label{lemma1}
		For $f_0,\cdots,f_{2n_g} \in \mathcal{S}(T^*{\mathbb{R}}^n \times {\mathbb{R}};\mathrm{End}(V\oplus W))$, if $\mathrm{dim}\left({\mathbb{R}^n}\right)^g=n_g>0$, then
		\begin{align*}
			\lim_{\hslash \to 0}\omega_g(\rho_{\hslash}(\hat{f_0}),\cdots,\rho_{\hslash}(\hat{f}_{2n_g}))= \epsilon_g(\rho_{0}(\hat{f_0}),\cdots,\rho_{0}(\hat{f}_{2n_g})).
		\end{align*}
	\end{lemma}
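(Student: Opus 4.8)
The plan is to unwind both sides into explicit integrals over $\mathbb{R}^n$ and carefully track the asymptotics in $\hslash$ of the integrand, exactly as in the non-equivariant computation of \cite{ENN2} but now keeping the twisting operator $\hat g$ in place. First I would substitute Proposition \ref{prop3}(1) to rewrite each $\delta_{\sigma(j)}(\rho_{\hslash}(\hat f_j))$ as $\rho_{\hslash}$ of a derivative of $f_j$: the odd derivations $\delta_{2j-1}$ contribute a factor $\widehat{\partial f/\partial x_j}$ with no power of $\hslash$, while the even derivations $\delta_{2j}$ contribute $(-\hslash/i)\,\widehat{\partial f/\partial\xi_j}$, i.e. one power of $\hslash$ each. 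Then, using Proposition \ref{prop3}(2) to collapse the product $\rho_{\hslash}(\hat f_0)\rho_{\hslash}(\ldots)\cdots$ into a single $\rho_{\hslash}$ of an iterated $\ast_H$-product, and Proposition \ref{prop3}(3) to evaluate $\mathrm{Tr}(\rho_{\hslash}(\cdot)\hat g)$, the whole expression $\omega_g(\rho_{\hslash}(\hat f_0),\dots,\rho_{\hslash}(\hat f_{2n_g}))$ becomes
\[
\frac{(-1)^{n_g}}{n_g!}\sum_{\sigma\in S_{2n_g}}\mathrm{sgn}(\sigma)\,\frac{1}{\hslash^n(2\pi)^n}\int_{\mathbb{R}^n}\mathrm{tr}\!\left[\begin{pmatrix} g^V & 0\\ 0 & g^W\end{pmatrix}\Phi_\sigma\!\left(x,\tfrac{gx-x}{\hslash},\hslash\right)\right]dx,
\]
where $\Phi_\sigma$ is the $\ast_H$-product of $\hat f_0$ with the appropriately permuted derivatives, carrying a total factor $\hslash^{n_g}$ coming from the $n_g$ even derivations that occur in each term (note that among $\delta_1,\dots,\delta_{2n_g}$ exactly $n_g$ are even-indexed).

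The heart of the argument is the change of variables adapted to the splitting $\mathbb{R}^n=(\mathbb{R}^n)^g\oplus\mathcal{N}(\mathbb{R}^n)^g$ from the Convention: write $x=(x',x'')$ with $x'\in(\mathbb{R}^n)^g$ the first $n_g$ coordinates and $x''\in\mathcal{N}(\mathbb{R}^n)^g$. Then $gx-x=(0,(g-1)x'')$, so the second argument of $\hat f$ is $\frac{1}{\hslash}(0,(g-1)x'')$, which blows up in the normal directions as $\hslash\to 0$ but is identically zero in the fixed directions. Substituting $u=(g-1)x''/\hslash$ absorbs $n-n_g$ powers of $\hslash$ and produces a Jacobian $\hslash^{n-n_g}/|\det(g-1)|$; combined with the $\hslash^{n_g}$ from the even derivations and the $\hslash^{-n}$ prefactor, all powers of $\hslash$ cancel. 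Since $\widehat{f_j}(x,\cdot,0)$ is the Fourier transform of $f_j(x,\cdot,0)$ in the cotangent variable, as $\hslash\to 0$ the $\ast_H$-product degenerates (by the $\hslash=0$ clause in its definition) to the pointwise matrix product $\widehat{f_0\,\partial f_1\cdots}$, and the $u$-integral against $e^{-i\langle u,\xi\rangle}$ reconstructs the evaluation at $\xi''=0$ on the normal part; what survives is an integral over $T^*(\mathbb{R}^n)^g$ of $\mathrm{tr}[\hat g\, f_0\,df_1\cdots df_{2n_g}]$, with the constant $1/(\det(g-1))$ emerging from the Jacobian. Matching the combinatorial prefactor $(-1)^{n_g}/n_g!\cdot\sum_\sigma\mathrm{sgn}(\sigma)$ and the $(2\pi)$-powers against the normalization in Definition \ref{def2} gives exactly $\epsilon_g(\rho_0(\hat f_0),\dots,\rho_0(\hat f_{2n_g}))$.

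I expect the main obstacle to be the rigorous justification of passing the limit $\hslash\to 0$ inside the integral after the change of variables — one must produce a dominating function uniform in $\hslash\in(0,1]$, which requires controlling the oscillatory behavior of the $\ast_H$-product and its $x$-derivatives using the Schwartz estimates on the $f_j$, together with an argument that the contributions from $x''\neq 0$ (where $|u|\to\infty$) decay fast enough. A secondary technical point is bookkeeping the sign: tracking how the permutation $\sigma$ interacts with the reshuffling of $dx$-type versus $d\xi$-type one-forms so that $\mathrm{sgn}(\sigma)$ correctly reproduces the wedge product $df_1\wedge\cdots\wedge df_{2n_g}$ on $T^*(\mathbb{R}^n)^g$ oriented by $dx_1\wedge d\xi_1\wedge\cdots\wedge dx_{n_g}\wedge d\xi_{n_g}$; here Proposition \ref{prop4} (that $g$ commutes with $x_i,\partial_{x_i}$ for $i\le n_g$) guarantees the derivations in the fixed directions behave as in the untwisted case, so the orientation computation reduces to the one already carried out in \cite{ENN2}.
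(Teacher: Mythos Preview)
Your proposal is correct and follows essentially the same route as the paper: apply Proposition~\ref{prop3}(1)--(3) to collapse the product into a single $\rho_{\hslash}$ and evaluate the twisted trace as an integral with argument $(x,(gx-x)/\hslash,\hslash)$, split $x$ along $(\mathbb{R}^n)^g\oplus\mathcal{N}(\mathbb{R}^n)^g$, rescale the normal variable by $\hslash$ to cancel the remaining $\hslash^{-(n-n_g)}$, pass to the limit so that $\ast_H$ degenerates to the pointwise product, and then use Fourier inversion in the normal direction together with the substitution $u=(g-1)w$ to produce the factor $1/\det(g-1)$ and the restriction to $T^*(\mathbb{R}^n)^g$. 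The paper performs the normal-direction rescaling and the $(g-1)$-substitution as two separate steps rather than combining them as you do, and it does not spell out the dominated-convergence argument you flag as the main technical point, but otherwise the computations are the same.
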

    \begin{proof}
    	The idea of proof is based on (\cite{ENN2}; Lemma 5.14).
    	By definition,
    	\begin{align*}
    		\omega_g(\rho_{\hslash}(\hat{f_0}),\cdots,\rho_{\hslash}(\hat{f}_{2n_g}))&= \frac{(-1)^{n_g}}{n_g!}\sum_{\sigma \in S_{2n_g}}{\mathrm{sgn}(\sigma)\mathrm{Tr}\left(\hat{g}\rho_{\hslash}(\hat{f_0}) \delta_{\sigma(1)}(\rho_{\hslash}(\hat{f}_{1})) \cdots \delta_{\sigma(2n_g)}(\rho_{\hslash}(\hat{f}_{2n_g})) \right)}.
    	\end{align*}
       We only need to analyze the term ${\mathrm{Tr}\left(\hat{g}\rho_{\hslash}(\hat{f_0}) \delta_{1}(\rho_{\hslash}(\hat{f}_{1})) \cdots \delta_{2n_g}(\rho_{\hslash}(\hat{f}_{2n_g})) \right)}$. By Proposition \ref{prop3}, for $\hslash>0$, 
       \begin{align*}
       	&{\mathrm{Tr}\left(\hat{g}\rho_{\hslash}(\hat{f_0}) \delta_{1}(\rho_{\hslash}(\hat{f}_{1})) \cdots \delta_{2n_g}(\rho_{\hslash}(\hat{f}_{2n_g})) \right)}\\
       	&=\left(-\frac{\hslash}{i}\right)^{n_g}\mathrm{Tr}\left(\hat{g}
       	\rho_{\hslash}(\widehat{f_0})\rho_{\hslash}\left(\widehat{\frac{\partial f_1}{\partial x_1}} \right) \cdots \rho_{\hslash}\left(\widehat{\frac{\partial f_{2n_g}}{\partial \xi_{n_g}}} \right)\right)\\
       	&=\left(-\frac{\hslash}{i}\right)^{n_g}\mathrm{Tr}\left(\hat{g}\rho_{\hslash}\left(\widehat{f_0}\ast_H \widehat{\frac{\partial f_1}{\partial x_1}}  \ast_H \cdots \ast_H  \widehat{\frac{\partial f_{2n_g}}{\partial \xi_{n_g}}}  \right)	\right)\\
       	&=\frac{i^{n_g}}{({2\pi })^n{\hslash}^{n-n_g}}
       	\int_{\mathbb{R}^n}\mathrm{tr}\left[\left(\begin{matrix}
       		g^V & 0\\
       		0 & g^W\\
       	\end{matrix} \right)\left(\widehat{f_0}\ast_H \widehat{\frac{\partial f_1}{\partial x_1}}  \ast_H \cdots \ast_H  \widehat{\frac{\partial f_{2n_g}}{\partial \xi_{n_g}}}  \right)(x,\frac{gx-x}{\hslash},\hslash) \right]dx\\
       	&=\frac{i^{n_g}}{({2\pi })^n{\hslash}^{n-n_g}}
       	\int_{(\mathbb{R}^n)^{g}}\int_{{\mathcal{N}{(\mathbb{R}^n)}^g}}\mathrm{tr}\left[\left(\begin{matrix}
       		g^V & 0\\
       		0 & g^W\\
       	\end{matrix} \right)\left(\widehat{f_0}\ast_H \widehat{\frac{\partial f_1}{\partial x_1}}  \ast_H \cdots \ast_H  \widehat{\frac{\partial f_{2n_g}}{\partial \xi_{n_g}}}  \right)((x_0,v),(0,\frac{gv-v}{\hslash}),\hslash) \right] dvdx_0,
       	\end{align*}
       	by variable substitution $\frac{v}{\hslash}=w$,
       	\begin{align*}
       		&=\frac{i^{n_g}}{({2\pi })^n}
       		\int_{(\mathbb{R}^n)^{g}}\int_{{\mathcal{N}{(\mathbb{R}^n)}^g}}\mathrm{tr}\left[\left(\begin{matrix}
       			g^V & 0\\
       			0 & g^W\\
       		\end{matrix} \right)\left(\widehat{f_0}\ast_H \widehat{\frac{\partial f_1}{\partial x_1}}  \ast_H \cdots \ast_H  \widehat{\frac{\partial f_{2n_g}}{\partial \xi_{n_g}}}  \right)((x_0,\hslash w),(0,{gw-w}),\hslash) \right]dwd{x_0}\\
       	&\stackrel{\hslash \to 0}{\longrightarrow} \frac{i^{n_g}}{({2\pi })^n}
       	\int_{(\mathbb{R}^n)^{g}}\int_{{\mathcal{N}{(\mathbb{R}^n)}^g}}\mathrm{tr}\left[\left(\begin{matrix}
       		g^V & 0\\
       		0 & g^W\\
       	\end{matrix} \right)\left(\widehat{f_0}\ast_H \widehat{\frac{\partial f_1}{\partial x_1}}  \ast_H \cdots \ast_H  \widehat{\frac{\partial f_{2n_g}}{\partial \xi_{n_g}}}  \right)((x_0,0),(0,{gw-w}),0) \right]dwd{x_0} \\
       &=\frac{i^{n_g}}{({2\pi })^n}
       \int_{(\mathbb{R}^n)^{g}}\int_{{\mathcal{N}{(\mathbb{R}^n)}^g}}\mathrm{tr}\left[\left(\begin{matrix}
       	g^V & 0\\
       	0 & g^W\\
       \end{matrix} \right)\left(\widehat{f_0\frac{\partial f_1}{\partial x_1}\cdots \frac{\partial f_{2n_g}}{\partial x_{2n_g}}} \right)((x_0,0),(0,{gw-w}),0) \right] dwdx_0\\
       	&=\frac{i^{n_g}}{({2\pi })^n}
       	\int_{(\mathbb{R}^n)^{g}}\int_{{\mathcal{N}{(\mathbb{R}^n)}^g}}\int_{\mathbb{R}^n}\mathrm{tr}\left[\left(\begin{matrix}
       		g^V & 0\\
       		0 & g^W\\
       	\end{matrix} \right)\left(f_0\frac{\partial f_1}{\partial x_1}\cdots \frac{\partial f_{2n_g}}{\partial x_{2n_g}} \right)((x_0,0),\xi,0)e^{-i\langle \xi, (0,{gw-w}) \rangle} \right]d\xi dwdx_0,
       \end{align*}
       by variable substitution
       $\xi=(\xi_1,\xi_2)$, where $\xi_1\in (\mathbb{R}^n)^{g}$ and $\xi_2 \in {\mathcal{N}{(\mathbb{R}^n)}^g}$,
       	\begin{align*}
       	&=\frac{i^{n_g}}{({2\pi })^n}
       	\int_{(\mathbb{R}^n)^{g}}\int_{{\mathcal{N}{(\mathbb{R}^n)}^g}}\int_{(\mathbb{R}^n)^{g}}\int_{{\mathcal{N}{(\mathbb{R}^n)}^g}}\mathrm{tr}\left[\left(\begin{matrix}
       		g^V & 0\\
       		0 & g^W\\
       	\end{matrix} \right) \left(f_0\frac{\partial f_1}{\partial x_1}\cdots \frac{\partial f_{2n_g}}{\partial x_{2n_g}} \right)((x_0,0),(\xi_1,\xi_2),0)e^{-i\langle \xi_2, {gw-w} \rangle}\right]
       	d\xi_2d\xi_1 dw dx_0,
       \end{align*}
   by variable substitution
   $u=gw-w$, denoted by the Jacobian determinant
   \begin{align*}
   	\mathcal{J}=\frac{1}{\mathrm{det}(g-1)},
   \end{align*} 
       \begin{align*}
       	&=\frac{i^{n_g}}{({2\pi })^n}
       	\int_{(\mathbb{R}^n)^{g}}\int_{{\mathcal{N}{(\mathbb{R}^n)}^g}}\int_{(\mathbb{R}^n)^{g}}\int_{{\mathcal{N}{(\mathbb{R}^n)}^g}}\mathrm{tr}\left[\left(\begin{matrix}
       		g^V & 0\\
       		0 & g^W\\
       	\end{matrix} \right) \left(f_0\frac{\partial f_1}{\partial x_1}\cdots \frac{\partial f_{2n_g}}{\partial x_{2n_g}} \right)((x_0,0),(\xi_1,\xi_2),0)e^{-i\langle \xi_2, u \rangle}\right]\mathcal{J}
       	d\xi_2d\xi_1 du dx_0\\
       	&=\frac{i^{n_g}}{({2\pi })^{n_g}}
       	\int_{(\mathbb{R}^n)^{g}}\int_{(\mathbb{R}^n)^{g}}\mathrm{tr}\left[\left(\begin{matrix}
       		g^V & 0\\
       		0 & g^W\\
       	\end{matrix} \right) \left(f_0\frac{\partial f_1}{\partial x_1}\cdots \frac{\partial f_{2n_g}}{\partial x_{2n_g}} \right)((x_0,0),(\xi_1,0),0)\right]\mathcal{J}
       	d\xi_1dx_0\\
       	&=\frac{i^{n_g}}{({2\pi })^{n_g}}
       	\int_{T^*(\mathbb{R}^n)^{g} \times \{0\}}\mathrm{tr}\left[\left(\begin{matrix}
       		g^V & 0\\
       		0 & g^W\\
       	\end{matrix} \right) \left(f_0\frac{\partial f_1}{\partial x_1}\cdots \frac{\partial f_{2n_g}}{\partial x_{2n_g}} \right)(x_0,\xi_1,0)\right]\mathcal{J}
       	d\xi_1dx_0,
       	\end{align*}
   where the penultimate equality follows from a technique involving the inversion theorem, i.e., $\forall f\in \mathcal{S}({\mathbb{R}^{t}})$, $t\in \mathbb{N}$,
   \begin{align*}
   	 f(0)=(2\pi)^{-t}\int_{{\mathbb{R}^{t}}}\int_{{\mathbb{R}^{t}}}f(y_1)e^{-i\langle y_1,y_2 \rangle}d{y_1}d{y_2}.
   \end{align*}
   The calculation for other terms is similar. Thus,
   \begin{align*}
   	&\lim_{\hslash \to 0}\omega_g(\rho_{\hslash}(\hat{f_0}),\cdots,\rho_{\hslash}(\hat{f}_{2n_g}))\\
   	&=\frac{1}{(2\pi i)^{n_g}n_g!{\mathrm{det}(g-1)}}
   	\int_{T^*(\mathbb{R}^n)^{g} \times \{0\}}\mathrm{tr}\left[\left(\begin{matrix}
   		g^V & 0\\
   		0 & g^W\\
   	\end{matrix} \right) \left(f_0 d{f_1}\cdots d{ f_{2n_g}} \right)\right]\\
   	&= \epsilon_g(\rho_{0}(\hat{f_0}),\cdots,\rho_{0}(\hat{f}_{2n_g})).
   \end{align*}
    \end{proof}

   Based on a similar proof, we obtain the following lemma.
   \begin{lemma}
   	\label{lemma3}
   	For $f_0,\cdots,f_{2n_g} \in \mathcal{S}(T^*{\mathbb{R}}^n \times {\mathbb{R}};\mathrm{End}(V\oplus W))$, if ${(\mathbb{R}^n)}^g=\{0\}$, then
   	\begin{align*}
   		\lim_{\hslash \to 0}\omega_g(\rho_{\hslash}(\hat{f_0}),\cdots,\rho_{\hslash}(\hat{f}_{2n_g}))= \epsilon_g(\rho_{0}(\hat{f_0}),\cdots,\rho_{0}(\hat{f}_{2n_g})).
   	\end{align*}
   \end{lemma}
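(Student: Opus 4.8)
\emph{Proof proposal.} The plan is to run the same mechanism as in the proof of Lemma~\ref{lemma1} — rescale in $\hslash$ using Proposition~\ref{prop3} and collapse the resulting oscillatory integral by Fourier inversion — but the argument shortens considerably because now $n_g=0$. In this case $2n_g=0$, so the cocycle has the single argument $f_0$; by Definition~\ref{def1} it is the $g$-twisted trace $\omega_g(T)=\mathrm{Tr}(\hat g T)$, and by Definition~\ref{def3} $\epsilon_g$ is the $g$-twisted trace $\epsilon_g(f)=\frac{1}{\det(g-1)}\mathrm{tr}\!\left[\diag(g^V,g^W)f(0,0)\right]$. Since $\rho_0(\hat f_0)(0,0)=f_0(0,0,0)$, the statement reduces to
\begin{align*}
	\lim_{\hslash\to 0}\mathrm{Tr}\bigl(\hat g\,\rho_\hslash(\hat f_0)\bigr)
	=\frac{1}{\det(g-1)}\,\mathrm{tr}\left[\left(\begin{matrix} g^V & 0\\ 0 & g^W\end{matrix}\right) f_0(0,0,0)\right].
\end{align*}

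First I would apply Proposition~\ref{prop3}(3), which gives, for $\hslash>0$,
\begin{align*}
	\mathrm{Tr}\bigl(\rho_\hslash(\hat f_0)\hat g\bigr)
	=\frac{1}{\hslash^{n}(2\pi)^{n}}\int_{\mathbb{R}^n}\mathrm{tr}\left[\left(\begin{matrix} g^V & 0\\ 0 & g^W\end{matrix}\right)\hat f_0\!\left(x,\frac{(g-1)x}{\hslash},\hslash\right)\right]dx.
\end{align*}
Because $({\mathbb{R}^n})^g=\{0\}$, the linear map $g-1$ is invertible on all of $\mathbb{R}^n$, with $\det(g-1)>0$: the complex eigenvalues of $g\in SO(n)$ occur in conjugate pairs contributing positive factors $|\lambda-1|^2$, while the eigenvalue $-1$ (contributing $-2$) occurs an even number of times since $\det g=1$. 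Hence the substitution $u=\frac{(g-1)x}{\hslash}$, i.e.\ $x=\hslash(g-1)^{-1}u$, has Jacobian $\hslash^{n}/\det(g-1)$ — matching the orientation convention of Lemma~\ref{lemma1} — and absorbs the factor $\hslash^{-n}$:
\begin{align*}
	\mathrm{Tr}\bigl(\rho_\hslash(\hat f_0)\hat g\bigr)
	=\frac{1}{(2\pi)^{n}\det(g-1)}\int_{\mathbb{R}^n}\mathrm{tr}\left[\left(\begin{matrix} g^V & 0\\ 0 & g^W\end{matrix}\right)\hat f_0\bigl(\hslash(g-1)^{-1}u,\,u,\,\hslash\bigr)\right]du.
\end{align*}
Letting $\hslash\to 0$ under the integral sign gives the limiting integrand $\mathrm{tr}\!\left[\diag(g^V,g^W)\hat f_0(0,u,0)\right]$, and the Fourier inversion formula recalled in the proof of Lemma~\ref{lemma1} yields $\int_{\mathbb{R}^n}\hat f_0(0,u,0)\,du=(2\pi)^n f_0(0,0,0)$. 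Substituting this in produces exactly the displayed limit, which by Definition~\ref{def3} equals $\epsilon_g(\rho_0(\hat f_0))$; this proves the lemma.

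The only points that need a little care are the interchange of limit and integral — legitimate by dominated convergence, since $f_0\in\mathcal{S}(T^*{\mathbb{R}}^n\times{\mathbb{R}})$ forces the Schwartz seminorms of $\hat f_0(\cdot,\cdot,\hslash)$ to be bounded uniformly for $\hslash\in[0,1]$, giving an integrable majorant independent of $\hslash$ — and the sign of the Jacobian, fixed above to agree with the convention in Lemma~\ref{lemma1}. Neither is a genuine obstacle: once Proposition~\ref{prop3}(3) is in hand, the proof is the $n_g=0$ specialization of the $\hslash$-rescaling argument already carried out in Lemma~\ref{lemma1}.
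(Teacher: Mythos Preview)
Your proof is correct and follows exactly the approach the paper indicates: the paper simply states that Lemma~\ref{lemma3} is obtained ``based on a similar proof'' to Lemma~\ref{lemma1}, and you have carried out precisely that specialization to $n_g=0$, applying Proposition~\ref{prop3}(3), rescaling $x\mapsto \hslash(g-1)^{-1}u$, and invoking Fourier inversion. Your explicit verification that $\det(g-1)>0$ is a useful detail the paper leaves implicit.
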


	Finally, we complete the discussion of the equivariant analytic index. The rest of proofs of Theorem \ref{th3} and Theorem \ref{th4} is given as follows.
	Continued with the mapping (\ref{map4}), by Lemma \ref{lemma1} and Lemma \ref{lemma3}, we obtain
	\begin{align}
		\label{map5}
		\mathrm{ind}_G(P):\  G &\to \mathbb{C}\\
		g &\mapsto
		\begin{aligned}[t]
			\mathrm{ind}_{(g)}(P)&=\mathrm{Tr}(g|_{L^2({\mathbb{R}}^n; V)}\mathrm{Ker}P)-\mathrm{Tr}(g|_{L^2({\mathbb{R}}^n; W)}\mathrm{Ker}P^*)\\
			&=\langle \left[\mathrm{Ker}P\right]-[\mathrm{Ker}P^*],[(2\pi i)^{n_g}(n_g)!\omega_g] \rangle\\
			&=\langle \left[e_{1}\right]-\left[\left(\begin{matrix}
				0 & 0\\
				0 & 1\\
			\end{matrix}
			\right)\right],[(2\pi i)^{n_g}(n_g)!\omega_g] \rangle\\
			&=\langle \left[e^{\infty}_{1}\right]-\left[\left(\begin{matrix}
				0 & 0\\
				0 & 1\\
			\end{matrix}
			\right)\right],[(2\pi i)^{n_g}(n_g)!\omega_g] \rangle\\
			&=\langle \left[e^{\infty}_{\hslash}\right]-\left[\left(\begin{matrix}
				0 & 0\\
				0 & 1\\
			\end{matrix}
			\right)\right],[(2\pi i)^{n_g}(n_g)!\omega_g] \rangle\ \mathrm{for\  all\  \hslash>0}\\
			&=\langle \left[e^{\infty}_{0}\right]-\left[\left(\begin{matrix}
				0 & 0\\
				0 & 1\\
			\end{matrix}
			\right)\right],[(2\pi i)^{n_g}(n_g)!\epsilon_g] \rangle\\
			&=\langle \left[e_{0}\right]-\left[\left(\begin{matrix}
				0 & 0\\
				0 & 1\\
			\end{matrix}
			\right)\right],[(2\pi i)^{n_g}(n_g)!\epsilon_g] \rangle\\
 	 &=\frac{1}{(2\pi i)^{n_g}{n_g}!\mathrm{det}(g-1) }\int_{T^*{(\mathbb{R}^n)}^g} \mathrm{tr}\left[\left(\begin{matrix}
		g^V & 0\\
		0 & g^W\\
	\end{matrix}\right) \hat{e_a}(d\hat{e_a})^{2n_g}\right].\notag
    \end{aligned}
	\end{align}
    Here the last equality can be verified as follows (\cite{ENN2}; Page 530). Suppose $P$ is of order $m>0$. Let $A$ be the subalgebra of $\left(C_0(T^*{\mathbb{R}}^n;\mathrm{End}(V\oplus W))\right)^g$ consisting of symbols of order $\leq -m$. Since $A$ is stable under the holomorphic functional calculus in $\left(C_0(T^*{\mathbb{R}}^n;\mathrm{End}(V\oplus W))\right)^g$ and $\hat{e_a}\in A$ (\cite{ENN2}; Section 3), we obtain
    \begin{align*}
    	\left[e^{\infty}_{0}\right]-\left[\left(\begin{matrix}
    		0 & 0\\
    		0 & 1\\
    	\end{matrix}
    	\right)\right]=\left[e_{a}\right]-\left[\left(\begin{matrix}
    		0 & 0\\
    		0 & 1\\
    	\end{matrix}
    	\right)\right] \in K_0(A).
    \end{align*}
    Furthermore, since $\epsilon_g$ is defined on $A$, the last equality above is derived. 
   \section{Example of Bott-Dirac operator}

   Denote by $\mathrm{Cliff}_{\mathbb{C}}(\mathbb{R}^{2n})$ the complex Clifford algebra of the  $2n$-dimensional Euclidean space $\mathbb{R}^{2n}$, that is, the universal unital complex algebra containing $\mathbb{R}^{2n}$ as a real subspace  subject to the relations 
   \begin{align*}
   	xx=|x|^2
   \end{align*}
   for all $x\in \mathbb{R}^{2n}$. It is a 
   $\mathbb{Z}_2$-graded algebra with each $e\in \mathbb{R}^{2n}$ having grading degree one. Decompose as 
   \begin{align*}
   	 \mathrm{Cliff}_{\mathbb{C}}(\mathbb{R}^{2n})=(\mathrm{Cliff}_{\mathbb{C}}(\mathbb{R}^{2n}))_0\oplus (\mathrm{Cliff}_{\mathbb{C}}(\mathbb{R}^{2n}))_1,
   \end{align*}
   where $(\mathrm{Cliff}_{\mathbb{C}}(\mathbb{R}^{2n}))_0$ and  $(\mathrm{Cliff}_{\mathbb{C}}(\mathbb{R}^{2n}))_1$ denote the even and odd elements respectively.
   Fix an orthonormal basis $\{e_1,\cdots, e_{2n}\}$ of $\mathbb{R}^{2n}$, then the monomials $e_{i_1}\cdots e_{i_k}$ for $i_1<\cdots<i_k$ form a linear basis of $\mathrm{Cliff}_{\mathbb{C}}(\mathbb{R}^{2n})$. Define a Hermitian inner product on $\mathrm{Cliff}_{\mathbb{C}}(\mathbb{R}^{2n})$ by deeming these monomials to be orthonormal, which does not depend on the choice of basis.
   Let $L^2(\mathbb{R}^{2n};\mathrm{Cliff}_{\mathbb{C}}(\mathbb{R}^{2n}))$ denote the $\mathbb{Z}_2$-graded Hilbert space of square integrable functions from $\mathbb{R}^{2n}$ to $\mathrm{Cliff}_{\mathbb{C}}(\mathbb{R}^{2n})$, with $\mathbb{Z}_2$-grading induced from $\mathrm{Cliff}_{\mathbb{C}}(\mathbb{R}^{2n})$ and let $\mathcal{S}(\mathbb{R}^{2n};\mathrm{Cliff}_{\mathbb{C}}(\mathbb{R}^{2n}))$ denote the dense subspace of Schwartz-class functions from $\mathbb{R}^{2n}$ to $\mathrm{Cliff}_{\mathbb{C}}(\mathbb{R}^{2n})$.
   
   Denote by $x_1,\cdots,x_{2n}: \mathbb{R}^{2n} \to \mathbb{R}$ the corresponding coordinates.
   For $i\in \{1,\cdots, 2n\}$, define the Clifford multiplication operators $\hat{c}(e_i)$ and $c(e_i)$  on $\mathrm{Cliff}_{\mathbb{C}}(\mathbb{R}^{2n})$ by the formulas
   \begin{align*}
   	 \hat{c}(e_i)&: w \mapsto (-1)^{\mathrm{deg}(w)}we_i,\\
   	 c(e_i)&: w \mapsto e_iw,
   \end{align*}
   where $w$ is a homogeneous element. The Dirac operator $D$ and Bott (resp. Clifford) operator $C$ are unbounded operators on $L^2(\mathbb{R}^{2n};\mathrm{Cliff}_{\mathbb{C}}(\mathbb{R}^{2n}))$ given by
   \begin{align*}
   	 D&=\sum_{i=1}^{2n}\hat{c}(e_i)\frac{\partial}{\partial x_i},\\
   	 C&=\sum_{i=1}^{2n}{c}(e_i){x_i},
   \end{align*}
   with domain $\mathcal{S}(\mathbb{R}^{2n};\mathrm{Cliff}_{\mathbb{C}}(\mathbb{R}^{2n}))$. Here by abuse of notation, $\hat{c}(e_i)$ and $c(e_i)$ denote the induced operators on $L^2(\mathbb{R}^{2n};\mathrm{Cliff}_{\mathbb{C}}(\mathbb{R}^{2n}))$, while $x_i$ denotes the multiplication operator by the coordinate function $x_i$ on $L^2(\mathbb{R}^{2n};\mathrm{Cliff}_{\mathbb{C}}(\mathbb{R}^{2n}))$, $i\in \{1,\cdots, 2n\}$. 
   
   The Bott-Dirac operator is the unbounded operator on $L^2(\mathbb{R}^{2n};\mathrm{Cliff}_{\mathbb{C}}(\mathbb{R}^{2n}))$
   \begin{align*}
   	 B=D+C
   \end{align*}
   with domain $\mathcal{S}(\mathbb{R}^{2n};\mathrm{Cliff}_{\mathbb{C}}(\mathbb{R}^{2n}))$. Note that $B$ is an odd, essentially self-adjoint operator and has compact resolvent. 
   By considering $SO(2n)$ acting on $\mathbb{R}^{2n}$ by isometry, $\mathrm{Cliff}_{\mathbb{C}}(\mathbb{R}^{2n})$ carries a natural $SO(2n)$-action by diagnoal, which preserves the $\mathbb{Z}_2$-grading. Furthermore, $L^2(\mathbb{R}^{2n};\mathrm{Cliff}_{\mathbb{C}}(\mathbb{R}^{2n}))$ is an $SO(2n)$-Hilbert space with induced actions.
   
   Firstly we show that $B$ is an $SO(2n)$-invariant elliptic $\Psi$DO of positive order. For a fixed $g\in SO(2n)$,
   \begin{align*}
     gDg^{-1}=\sum_{i=1}^{2n}\hat{c}(g.e_i)\frac{\partial}{\partial (g^{-1}.x)_i},
   \end{align*} 
   which shows that $gDg^{-1}$ is the local expression of $D$ under another orthonormal basis $\{g.e_1\cdots,g.e_{2n}\}$ of $\mathbb{R}^{2n}$. Thus $D$ is $SO(2n)$-invariant and similarly, $B$ is also $SO(2n)$-invariant. As $B$ is an odd and essentially self-adjoint operator, denote by 
   \begin{align*}
   	\left(\begin{matrix}
   		0 & a^*\\
   		a & 0
   	\end{matrix}\right)
   \end{align*}
   the symbol of $B$. So 
   \begin{align*}
   	 \left(\begin{matrix}
   	 	0 & a^*\\
   	 	a & 0
   	 \end{matrix}\right)=\sum_{j=1}^{2n}(\hat{c}(e_j){i \xi_j}+{c}(e_k){x_j}).
   \end{align*}
   After taking square of both sides of the above formula, we obtain
   \begin{align*}
   	 \left(\begin{matrix}
   	 	a^*a & 0\\
   	 	0 & aa^*
   	 \end{matrix}\right)=|z|^2I,
   \end{align*}
   where $z=(x,\xi)\in {\mathbb{R}^{2n}\times \mathbb{R}^{2n}}$, thus by Definitions \ref{def5} and \ref{def4}, $a$ is elliptic and has order 1.  
   
   Then we calculate the equivariant index of $B$ by formulas of Theorem \ref{th3} and Theorem \ref{th4}.

   To begin with, we focus on the case when $n=1$. 
   For an orthonormal basis $\{e_1,e_2\}$ of $\mathbb{R}^{2}$, $\{1,e_1e_2,e_1,e_2\}$ forms a basis of $\mathrm{Cliff}_{\mathbb{C}}(\mathbb{R}^{2})=(\mathrm{Cliff}_{\mathbb{C}}(\mathbb{R}^{2}))_0\oplus (\mathrm{Cliff}_{\mathbb{C}}(\mathbb{R}^{2}))_1$ with respect to the grading. Under the basis, the Clifford multiplication operators are represented by
   \begin{align*}
   	 \hat{c}(e_1)&= \left(\begin{matrix}
   	 	0 &0 &-1 &0\\
   	 	0 &0 &0 &1\\
   	 	1 &0 &0 &0\\
   	 	0 &-1 &0 &0
   	 \end{matrix}\right),\\
    \hat{c}(e_2)&= \left(\begin{matrix}
    	0 &0 &0 &-1\\
    	0 &0 &-1 &0\\
    	0 &1 &0 &0\\
    	1 &0 &0 &0
    \end{matrix}\right),\\
   	{c}(e_1)&= \left(\begin{matrix}
   		0 &0 &1 &0\\
   		0 &0 &0 &1\\
   		1 &0 &0 &0\\
   		0 &1 &0 &0
   	\end{matrix}\right),\\
    {c}(e_2)&= \left(\begin{matrix}
   	0 &0 &0 &1\\
   	0 &0 &-1 &0\\
   	0 &-1 &0 &0\\
   	1 &0 &0 &0
   \end{matrix}\right).
   \end{align*}
   Therefore, the symbol of $B$ is given by
   \begin{align*}
   	\left(\begin{matrix}
   		0& a^*\\
   		a& 0
   	\end{matrix}\right)\  \mathrm{with}\  a=\left(\begin{matrix}
   	x_1+i\xi_1 & -x_2+i\xi_2 \\
   	x_2+i\xi_2 & x_1-i\xi_1
   \end{matrix}\right),
   \end{align*}
   where $(x_1,x_2,\xi_1,\xi_2)\in T^*\mathbb{R}^{2}$ and $x_1,x_2,\xi_1,\xi_2\in \mathbb{R}$.
   Note that $B: \mathcal{S}(\mathbb{R}^{2};(\mathrm{Cliff}_{\mathbb{C}}(\mathbb{R}^{2}))_0) \to \mathcal{S}(\mathbb{R}^{2};(\mathrm{Cliff}_{\mathbb{C}}(\mathbb{R}^{2}))_1)$. For
   \begin{align*}
   	 g=\left(\begin{matrix}
   	 	\mathrm{cos}\theta & \mathrm{-sin}\theta\\
   	 	\mathrm{sin}\theta & \mathrm{cos}\theta
   	 \end{matrix}\right)\in SO(2), \ \mathrm{where}\ \theta \in [0,2\pi],
   \end{align*}
   under the basis $\{1,e_1e_2\}$ of $(\mathrm{Cliff}_{\mathbb{C}}(\mathbb{R}^{2}))_0$, the $g$-action is given by
   \begin{align*}
   	 g^{(\mathrm{Cliff}_{\mathbb{C}}(\mathbb{R}^{2}))_0}= \left(\begin{matrix}
   	 	1 & 0 \\
   	 	0 & 1
   	 \end{matrix}\right),
   \end{align*}
   while under the basis $\{e_1,e_2\}$ of $(\mathrm{Cliff}_{\mathbb{C}}(\mathbb{R}^{2}))_1$, the $g$-action is given by
   \begin{align*} g^{(\mathrm{Cliff}_{\mathbb{C}}(\mathbb{R}^{2}))_1}=
   	\left(\begin{matrix}
   		\mathrm{cos}\theta & \mathrm{-sin}\theta\\
   		\mathrm{sin}\theta & \mathrm{cos}\theta
   	\end{matrix}\right).
   \end{align*}
   When $g=1\in SO(2)$, the whole $\mathbb{R}^{2}$ is fixed by $g$. So by Theorem \ref{th3} or Theorem \ref{th1},
   \begin{align*}
   	 \mathrm{ind}_{(1)}(B)= \frac{1}{(2\pi i)^22!}\int_{T^*\mathbb{R}^2}^{} \mathrm{tr}(\hat{e_a}(d\hat{e_a})^{4}).
   \end{align*} 
   Since 
   \begin{align*}
   	\hat{e_a}= \frac{1}{1+{x_1}^2+{x_2}^2+{\xi_1}^2+{\xi_2}^2}\left(\begin{matrix}
   		I_2 & a^*\\
   		a & -I_2
   	\end{matrix}\right),
   \end{align*}
   by direct computation, 
   \begin{align*}
   	 \mathrm{ind}_{(1)}(B)&=\frac{1}{(2\pi i)^22!}\int_{T^*\mathbb{R}^2}^{}\frac{-4\times 4!dx_1dx_2d\xi_1d\xi_2}{\left({1+{x_1}^2+{x_2}^2+{\xi_1}^2+{\xi_2}^2}\right)^5}\\
   	 &=1.
   \end{align*}
   When $g=\left(\begin{matrix}
   	\mathrm{cos}\theta & \mathrm{-sin}\theta\\
   	\mathrm{sin}\theta & \mathrm{cos}\theta
   \end{matrix}\right)\neq1\in SO(2)$, $\mathrm{cos}\theta\neq 1$ and only the origin of $\mathbb{R}^{2}$ fixed by $g$, by Theorem \ref{th4},
   \begin{align*}
   	 \mathrm{ind}_{(g)}(B)&= \frac{1}{\mathrm{det}(g-1) } \mathrm{tr}\left[\left(\begin{matrix}
   	 	g^{(\mathrm{Cliff}_{\mathbb{C}}(\mathbb{R}^{2}))_0} & 0\\
   	 	0 & g^{(\mathrm{Cliff}_{\mathbb{C}}(\mathbb{R}^{2}))_1}\\
   	 \end{matrix}
   	 \right) \hat{e_a}(0,0)\right]\\
   	 &=\frac{\mathrm{tr}\left(g^{(\mathrm{Cliff}_{\mathbb{C}}(\mathbb{R}^{2}))_0}\right) - \mathrm{tr}\left(g^{(\mathrm{Cliff}_{\mathbb{C}}(\mathbb{R}^{2}))_1} \right)}{\mathrm{det}(g-1)}\\
   	 &=\frac{2-2\mathrm{cos}\theta}{2-2\mathrm{cos}\theta}\\
   	 &=1.
   \end{align*}
   Therefore, the equivariant index of $B$ is 1 at each $g\in SO(2)$.

   As for the case when $n>1$, we proceed using the language of Kasparov KK-theory. Denote by $[B_k]\in KK_{SO(2k)}(\mathbb{C},\mathbb{C})$ the unbounded Kasparov cycle associated to the Bott-Dirac operator on $\mathbb{R}^{2k}$ where $k\in \mathbb{N}$. By the $*$-isomorphism
   \begin{align*}
   	 \mathrm{Cliff}_{\mathbb{C}}(\mathbb{R}^{2n})\cong \underbrace{\mathrm{Cliff}_{\mathbb{C}}(\mathbb{R}^{2}) \widehat{\otimes} \cdots \widehat{\otimes} \mathrm{Cliff}_{\mathbb{C}}(\mathbb{R}^{2})}_{n\ \mathrm{times}},
   \end{align*}
   of graded Hilbert spaces, where $\widehat{\otimes}$ denotes the graded tensor product, we obtain
   \begin{align*}
   	 B_{n}=\sum_{i=1}^{n} 1\widehat{\otimes}\cdots \widehat{\otimes}1 \widehat{\otimes} \underbrace{B_1}_{i\mathrm{th}\ \mathrm{place}} \widehat{\otimes}1 \widehat{\otimes} \cdots \widehat{\otimes} 1
   \end{align*}
   with respect to the decomposition. 
   Note that the decompositions of $\mathrm{Cliff}_{\mathbb{C}}(\mathbb{R}^{2n})$ and $B_n$ are also compatible with the restricted actions of $\prod_{i=1}^{n}SO(2)$,  it derives that
   \begin{align*}
   	[B_{n}]=\underbrace{[B_1]\times \cdots \times [B_1]}_{n\ \mathrm{times}} \in KK_{SO(2n)}(\mathbb{C},\mathbb{C}),
   \end{align*}
    where the right hand side denotes the Kasparov product in the sense of Section 4 of \cite{Kasparov} and we use the identification that an element of the representation ring $R(SO(2n))$ is uniquely determined by its restriction to the maximal torus $\prod_{i=1}^{n}SO(2)$. 
    From above discussion, $[B_1]=1\in KK_{SO(2)}(\mathbb{C},\mathbb{C})$, so we obtain $[B_n]=1\in KK_{SO(2n)}(\mathbb{C},\mathbb{C})$, which derives that the equivariant index of $2n$-dimensional Bott-Dirac operator is 1 at each $g\in SO(2n)$.

	\section{Appendix}
	\begin{proposition}
		\label{prop6}
		Define a cyclic $2n$-cocycle $\omega$ on $\mathcal{K}^{\infty}$ as follows \cite{ENN2}:
		\begin{align*}
			\omega(T_0,\cdots,T_{2n})=  \frac{(-1)^{n}}{n!}\sum_{\sigma \in S_{2n}}{\mathrm{sgn}(\sigma)\mathrm{Tr}\left(T_0 \delta_{\sigma(1)}(T_1) \cdots \delta_{\sigma(2n)}(T_{2n}) \right)},
		\end{align*}
	where
	\begin{align*}
		T_0,\cdots,T_{2n}\in \mathcal{K}^{\infty}.
	\end{align*}
	Then for any idempotent $T \in\mathcal{K}^{\infty}$, $\langle [T],[(2\pi i)^{n}n!\omega] \rangle=\mathrm{Tr}(T)$.
	\end{proposition}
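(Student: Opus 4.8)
The plan is to reduce the statement to the identity $\omega(T,\dots,T)=\mathrm{Tr}(T)$ (with $2n+1$ entries) and then to establish the latter by induction on $n$. The reduction is immediate from the normalization of the Chern--Connes pairing recalled in Remark~\ref{remark1}: for a $2n$-cocycle one has
\[
\langle[T],[(2\pi i)^{n}n!\,\omega]\rangle=(2\pi i)^{n}n!\cdot(2\pi i)^{-n}(n!)^{-1}\,\omega(T,\dots,T)=\omega(T,\dots,T),
\]
and the matrix case is identical with $\omega\#\mathrm{tr}$ in place of $\omega$. Throughout I will use that $\mathcal{K}^{\infty}\subseteq\mathcal{L}_{1}(L^{2}(\mathbb{R}^{n}))$, so that $\mathrm{Tr}$, its cyclicity, and the vanishing $\mathrm{Tr}([\partial_{x_{j}},S])=\mathrm{Tr}([x_{j},S])=0$ for $S\in\mathcal{K}^{\infty}$ are available (the last by integration by parts on Schwartz kernels).

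For the base case $n=1$ I would compute directly. Starting from
\[
\omega(T,T,T)=-\mathrm{Tr}\big(T[\partial_{x_{1}},T][x_{1},T]\big)+\mathrm{Tr}\big(T[x_{1},T][\partial_{x_{1}},T]\big),
\]
expanding the commutators, simplifying products with $T^{2}=T$, and using cyclicity of $\mathrm{Tr}$ repeatedly, the expression collapses to the single surviving term $\mathrm{Tr}\big(T[\partial_{x_{1}},x_{1}]\big)=\mathrm{Tr}(T\cdot 1)=\mathrm{Tr}(T)$. The point, and the conceptual heart of the proposition, is that the canonical commutation relation $[\partial_{x_{1}},x_{1}]=1$ is precisely what manufactures the factor $\mathrm{Tr}(T)$; this is the $g=e$ instance of the computation $\overline{\omega_{g}}(T,T,T)=\mathrm{Tr}(g|_{L^{2}}T)$ used in Proposition~\ref{prop1}.

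For the inductive step $n-1\to n$ I would use the factorization $L^{2}(\mathbb{R}^{n})\cong L^{2}(\mathbb{R}^{n-1})\otimes L^{2}(\mathbb{R})$, under which $\mathcal{K}^{\infty}(L^{2}(\mathbb{R}^{n}))\cong\mathcal{K}^{\infty}(L^{2}(\mathbb{R}^{n-1}))\,\widehat{\otimes}\,\mathcal{K}^{\infty}(L^{2}(\mathbb{R}))$, $\mathrm{Tr}=\mathrm{Tr}\otimes\mathrm{Tr}$, while $\delta_{1},\dots,\delta_{2n-2}$ act on the first factor and $\delta_{2n-1},\delta_{2n}$ on the second. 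Because $K_{0}(\mathcal{K}^{\infty})\cong K_{0}(\mathcal{K})\cong\mathbb{Z}$ and the pairing depends only on $[T]$, it suffices to take $T=q\otimes r$ with $q,r$ rank-one projections on $L^{2}(\mathbb{R}^{n-1}),L^{2}(\mathbb{R})$. Expanding $\omega(q\otimes r,\dots,q\otimes r)$ and splitting the trace, each summand over $\sigma\in S_{2n}$ factors as a product of a trace in $q$ and a trace in $r$, which depend only on the relative orders imposed by $\sigma$ on the indices $\le 2n-2$ and on $\{2n-1,2n\}$ respectively; the interleaving enters only through a shuffle sign. Grouping $\sigma$ accordingly and using the elementary identity $\sum_{1\le i<j\le 2n}(-1)^{i+j}=-n$, the whole sum collapses to
\[
\tfrac{(-1)^{n}}{n!}\cdot n\cdot\big[(-1)^{n-1}(n-1)!\,\mathrm{Tr}(q)\big]\cdot\big[-\mathrm{Tr}(r)\big]=\mathrm{Tr}(q)\,\mathrm{Tr}(r)=\mathrm{Tr}(q\otimes r),
\]
where the two bracketed factors come from the induction hypothesis applied to the $2(n-1)$-cocycle on $\mathbb{R}^{n-1}$ and from the base case applied to the $\mathbb{R}$-factor. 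This closes the induction.

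The routine parts are the general facts about $\mathrm{Tr}$ on $\mathcal{K}^{\infty}$ and the base-case algebra (short, once one uses $T^{2}=T$ and cyclicity). The main obstacle I anticipate is the bookkeeping in the inductive step: one must track the shuffle signs carefully and verify the identity $\sum_{i<j}(-1)^{i+j}=-n$, since it is exactly this combinatorial factor that makes the resulting constant equal $1$ rather than some other scalar.
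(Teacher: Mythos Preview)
Your approach differs substantially from the paper's. The paper does not use any tensor factorization of $L^2(\mathbb{R}^n)$. Instead it works entirely inside cyclic cohomology: for each $m$ it writes down an explicit $(2m+1)$-cochain $\psi_{2m+1}$ on $\mathcal{K}^\infty$ (built from the ``half-commutators'' $\widetilde{\delta_{2j-1}}(T)=T\partial_{x_j}$, $\widetilde{\delta_{2j}}(T)=Tx_j$) and verifies $b\psi_{2m+1}=-\omega_{2m+2}$ and $B\psi_{2m+1}=-\tfrac{2(2m+1)}{m+1}\sum\omega_{2m}$, where $\sum\omega_{2m}$ is the sum of the $m+1$ cocycles obtained from $\omega_{2m}$ by choosing which coordinate pair $(\partial_{x_k},x_k)$ to drop. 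Connes' periodicity operator then gives $S(\sum\omega_{2m})=2\pi i(m+1)^2\omega_{2m+2}$, and $S$-invariance of the pairing (Proposition~14 of \cite{Connes}) closes the induction. This works for an arbitrary idempotent, with no reduction to rank-one projections.

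Your inductive step, by contrast, has a real gap. The assertion that the two factored traces ``depend only on the relative orders'' is false. Write $i<j$ for the positions at which $\sigma$ takes values in $\{2n-1,2n\}$. In the $r$-factor one gets
\[
\mathrm{Tr}\bigl(r\cdot r^{\,i-1}\,\delta_{\sigma(i)}(r)\,r^{\,j-i-1}\,\delta_{\sigma(j)}(r)\,r^{\,2n-j}\bigr),
\]
which vanishes whenever $j>i+1$, because $r\,\delta(r)\,r=0$ for any idempotent $r$ and any derivation $\delta$; it equals $\mathrm{Tr}\bigl(r\,\delta_{\sigma(i)}(r)\,\delta_{\sigma(j)}(r)\bigr)$ only when $j=i+1$. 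In the $q$-factor the two inserted copies of $q$ collapse to a single $q$ placed after $i-1$ derivations of $q$; the same identity kills the trace when $i-1$ is odd and leaves the $\omega_{2n-2}$-type trace unchanged when $i-1$ is even. Thus only the $n$ shuffles with $(i,j)=(2k-1,2k)$, $k=1,\dots,n$, survive, each with shuffle sign $+1$, and the identity $\sum_{i<j}(-1)^{i+j}=-n$ you invoke, while true, plays no role. With this correction your tensor-product argument does go through and gives a more elementary proof than the paper's $(b,B)$ computation, at the price of first reducing to a rank-one generator of $K_0$.
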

	\begin{proof}
	 
	 Consider the case of $\mathbb{R}^n$. Denote by 
	 \begin{align*}
	 	\omega_{2k}(T_0,\cdots,T_{2k})=  \frac{(-1)^{k}}{k!}\sum_{\sigma \in S_{2k}}{\mathrm{sgn}(\sigma)\mathrm{Tr}\left(T_0 \delta_{\sigma(1)}(T_1) \cdots \delta_{\sigma(2k)}(T_{2k}) \right)},
	 \end{align*}
     where $k=1,\cdots,n$.
     Induction on $k$.
     Fix an idempotent $T \in\mathcal{K}^{\infty}$, observe that
	 $\langle T,[(2\pi i)\omega_2] \rangle=\mathrm{Tr}(T)$.
	 For $1\leq m<n$, assume that by induction
	 \begin{align*}
	 	\langle [T],[(2\pi i)^{m}m!\omega_{2m}] \rangle=\mathrm{Tr}(T),
	 \end{align*}
	 then we shall construct a $(2m+1)$-cochain $\psi_{2m+1}$ on $\mathcal{K}^{\infty}$ such that
	 \begin{align}
	 	\label{formula1}
	 	B\psi_{2m+1}=-\frac{2(2m+1)}{m+1}\sum \omega_{2m},\ 
	 	b\psi_{2m+1}=-\omega_{2m+2}.
	 \end{align}
     Here $\sum \omega_{2m}$ is a cyclic cocycle on $\mathcal{K}^{\infty}$ given below.

	Let us define a $(2m+1)$-cochain $\psi_{2m+1}$ on $\mathcal{K}^{\infty}$ by
	\begin{align*}
		\psi_{2m+1}(T_0,\cdots,T_{2m+1})=\frac{(-1)^{m+1}}{(m+1)!}\sum_{\sigma \in S_{2m+2}}{\mathrm{sgn}(\sigma)\mathrm{Tr}\left(\widetilde{\delta_{\sigma(1)}}(T_0) \delta_{\sigma(2)}(T_1) \cdots \delta_{\sigma(2m+2)}(T_{2m+1}) \right)},
	\end{align*}
    where $T_0,\cdots,T_{2m+1}\in \mathcal{K}^{\infty}$.
    Here the operators $\widetilde{\delta_{*}}$ are given by
    \begin{align*}
    	\widetilde{\delta_{\sigma(2j-1)}}(T)&=T\partial_{x_j},\\
    	\widetilde{\delta_{\sigma(2j)}}(T)&=Tx_j,
    \end{align*}
    where $T\in \mathcal{K}^{\infty}$ and $j=1,\cdots, m+1$, and the operators $\delta_{*}$ are given in Section \ref{3.4}.
    
    Firstly verify that $B\psi_{2m+1}=-\frac{2(2m+1)}{m+1}\sum \omega_{2m}$. In fact, $B\psi_{2m+1}=NB_0\psi_{2m+1}$, where
    \begin{align*}
    	B_0\psi_{2m+1}(T_0,\dots,T_{2m})&=\psi_{2m+1}(1,T_0,\dots,T_{2m})-(-1)^{2m+1}\psi_{2m+1}(T_0,\dots,T_{2m},1)\\
    	&=\frac{(-1)^{m+1}}{(m+1)!}\sum_{\sigma \in S_{2m+2}}{\mathrm{sgn}(\sigma)\mathrm{Tr}\left(\widetilde{\delta_{\sigma(1)}}(1) \delta_{\sigma(2)}(T_0) \cdots \delta_{\sigma(2m+2)}(T_{2m}) \right)}.
    \end{align*}
    We claim that $\lambda B_0\psi_{2m+1}=B_0\psi_{2m+1}$.
    Actually, 
    \begin{align}
    	\label{s8}
    	\lambda B_0\psi_{2m+1}(T_0,\dots,T_{2m})&=(-1)^{2m}B_0\psi_{2m+1}(T_{2m},T_0,\dots,T_{2m-1})\notag\\
    	&=\frac{(-1)^{m+1}}{(m+1)!}\sum_{\tau \in S_{2m+2}}{\mathrm{sgn}(\tau)\mathrm{Tr}\left(\widetilde{\delta_{\tau(1)}}(1) \delta_{\tau(2)}(T_{2m})\delta_{\tau(3)}(T_{0}) \cdots \delta_{\tau(2m+2)}(T_{2m-1}) \right)},
    \end{align}
    taking $\tau=(\sigma(1), \sigma(2m+2),\sigma(2),\cdots,\sigma(2m+1))$ for variable $\sigma \in S_{2m+2}$,
    then $\mathrm{det}(\tau)=\mathrm{det}(\sigma)$ and
    \begin{align*}
    	(\ref{s8})&=\frac{(-1)^{m+1}}{(m+1)!}\sum_{\sigma \in S_{2m+2}}{\mathrm{sgn}(\sigma)\mathrm{Tr}\left(\widetilde{\delta_{\sigma(1)}}(1) \delta_{\sigma(2m+2)}(T_{2m})\delta_{\sigma(2)}(T_{0}) \cdots \delta_{\sigma(2m+1)}(T_{2m-1}) \right)}.
    \end{align*}
    Since
    \begin{align*}
    	B_0\psi_{2m+1}(T_0,\dots,T_{2m})&=\frac{(-1)^{m+1}}{(m+1)!}\sum_{\sigma \in S_{2m+2}}{\mathrm{sgn}(\sigma)\mathrm{Tr}\left( \delta_{\sigma(2m+2)}(T_{2m})\widetilde{\delta_{\sigma(1)}}(1) \delta_{\sigma(2)}(T_0) \cdots \delta_{\sigma(2m+1)}(T_{2m-1}) \right)},
    \end{align*}
    we obtain
    \begin{align*}
    	(&\lambda B_0\psi_{2m+1}-B_0\psi_{2m+1})(T_0,\dots,T_{2m})\\
    	&=\frac{(-1)^{m+1}}{(m+1)!}\sum_{\sigma \in S_{2m+2}}{\mathrm{sgn}(\sigma)\mathrm{Tr}\left(({\delta_{\sigma(1)}} \delta_{\sigma(2m+2)})(T_{2m})\delta_{\sigma(2)}(T_{0}) \cdots \delta_{\sigma(2m+1)}(T_{2m-1}) \right)}\\
    	&=0,
    \end{align*}
    where the last equality follows from the coefficient $\mathrm{sgn}(\sigma)$ and $\sigma_i\sigma_j=\sigma_j\sigma_i$, for any $i,j\in \{1,\cdots,2m+2\}$. 
    
    Hence
    \begin{align*}
    	NB_0\psi_{2m+1}(T_0,\dots,T_{2m})&=(2m+1)B_0\psi_{2m+1}(T_0,\dots,T_{2m})\\
    	&=(2m+1)\frac{(-1)^{m+1}}{(m+1)!}\sum_{\sigma \in S_{2m+2}}{\mathrm{sgn}(\sigma)\mathrm{Tr}\left(\widetilde{\delta_{\sigma(1)}}(1) \delta_{\sigma(2)}(T_0) \cdots \delta_{\sigma(2m+2)}(T_{2m}) \right)}\\
    	&=\frac{2m+1}{m+1}\frac{(-1)^{m}}{m!}\sum_{\sigma \in S_{2m+2}}{\mathrm{sgn}(\sigma)\mathrm{Tr}\left(\delta_{\sigma(2)}(\widetilde{\delta_{\sigma(1)}(1)})T_0 \cdots \delta_{\sigma(2m+2)}(T_{2m}) \right)}.
    \end{align*}
    The last equality follows from
    \begin{align*}
    	\mathrm{Tr}\left(\delta_{\sigma(2)}\left(\widetilde{\delta_{\sigma(1)}(1)}T_0 \cdots \delta_{\sigma(2m+2)}(T_{2m})\right) \right)=0,
    \end{align*}
    where the left hand side can be decomposed by the derivation property of $\delta_{\sigma(2)}$
    and since  $\sigma_i\sigma_j=\sigma_j\sigma_i$, for any $i,j\in \{1,\cdots,2m+2\}$, 
    \begin{align*}
    	\sum_{\sigma \in S_{2m+2}}{\mathrm{sgn}(\sigma)\mathrm{Tr}\left(\widetilde{\delta_{\sigma(1)}(1)}T_0\delta_{\sigma(2)}\left(\delta_{\sigma(3)}(T_{1}) \cdots \delta_{\sigma(2m+2)}(T_{2m})\right) \right)}=0.
    \end{align*}
    By definition of $\widetilde{\delta_{*}}$, $\widetilde{\delta_{\sigma(1)}(1)}=\partial_{x_j}$ or $x_j$ for some $j$, thus $\delta_{\sigma(2)}(\widetilde{\delta_{\sigma(1)}(1)})$ vanishes unless
    $(\sigma(1), \sigma(2))$ is one pair of $(1,2),\cdots,(2m+1,2m+2)$. In this way, for a fixed $\sigma \in S_{2m+2}$, define an induced $\sigma_0 \in S_{2m}$ by
    \begin{align*}
    	\sigma=(\sigma(1),\sigma(2),\sigma_0(1),\cdots,\sigma_0(2m)),
    \end{align*}
    then 
    \begin{align*}
    	\mathrm{sgn}(\sigma)=\mathrm{sgn}(\sigma(1),\sigma(2))\mathrm{sgn}(\sigma_0).
    \end{align*}
    Hence
    \begin{align*}
    	NB_0\psi_{2m+1}(T_0,\dots,T_{2m})=-\frac{2(2m+1)}{m+1}\frac{(-1)^{m}}{m!}\sum_{k=1}^{m+1}\sum_{\sigma_0 \in S^{(k)}_{2m}}{\mathrm{sgn}(\sigma_0)\mathrm{Tr}\left(T_0\delta_{\sigma_0(1)}(T_{1}) \cdots \delta_{\sigma_0(2m)}(T_{2m}) \right)},
    \end{align*}
    where $S^{(k)}_{2m}$ is the symmetric group of the set
    \begin{align*}
    	\{1,2,\cdots,2m+1,2m+2\}
    \end{align*}
    subtracting $\{2k-1, 2k\}$.
    Note that on the right hand side, the term of $k=1$ corresponds to $\omega_{2m}$ with a constant coefficient. Denote by $\sum \omega_{2m}$ another cyclic cocycle on $\mathcal{K}^{\infty}$ given by
    \begin{align*}
    	\sum \omega_{2m}(T_0,\cdots,T_{2m})=\frac{(-1)^{m}}{m!}\sum_{k=1}^{m+1}\sum_{\sigma_0 \in S^{(k)}_{2m}}{\mathrm{sgn}(\sigma_0)\mathrm{Tr}\left(T_0\delta_{\sigma_0(1)}(T_{1}) \cdots \delta_{\sigma_0(2m)}(T_{2m}) \right)},
    \end{align*}
    where $T_0,\cdots,T_{2m} \in \mathcal{K}^{\infty}$,
    then we have proved that
    \begin{align*}
    	B\psi_{2m+1}=-\frac{2(2m+1)}{m+1}\sum \omega_{2m}.
    \end{align*}
    
    Next verify that $b\psi_{2m+1}=-\omega_{2m+2}$.
    Since 
    \begin{equation}
    	\begin{gathered}
    		\label{s9}
    		\begin{aligned}
    			b\psi_{2m+1}(T_0,\cdots,T_{2m+2})=\psi_{2m+1}(&T_0T_1,T_2,\cdots,T_{2m+2})-\psi_{2m+1}(T_0,T_1T_2,\cdots,T_{2m+2})\\
    			&+(-1)^{2m+1}\psi_{2m+1}(T_0,T_1,\cdots,T_{2m+1}T_{2m+2})\\
    			&+(-1)^{2m+2}\psi_{2m+1}(T_{2m+2}T_0,T_1,\cdots,T_{2m+1}),
    		\end{aligned}
    	\end{gathered}
    \end{equation}
    by definition,
    \begin{align*}
    	(\ref{s9})&=\frac{(-1)^{m+1}}{(m+1)!}\sum_{\sigma \in S_{2m+2}}\mathrm{sgn}(\sigma)\left[{\mathrm{Tr}\left(\widetilde{\delta_{\sigma(1)}}(T_0T_1) \delta_{\sigma(2)}(T_2) \cdots \delta_{\sigma(2m+2)}(T_{2m+2}) \right)}\right. \\ 
    	&\;+ \sum_{i=1}^{2m+1}{(-1)^i\mathrm{Tr}\left(\widetilde{\delta_{\sigma(1)}}(T_0) \delta_{\sigma(2)}(T_1) \cdots \delta_{\sigma(i)}(T_iT_{i+1})\cdots\delta_{\sigma(2m+2)}(T_{2m+2}) \right)}\\
    	&\;+\left. {\mathrm{Tr}\left(\widetilde{\delta_{\sigma(1)}}(T_{2m+2}T_0) \delta_{\sigma(2)}(T_1) \cdots \delta_{\sigma(2m+2)}(T_{2m+1}) \right)}\right].
    \end{align*}
    After cancelations, the remaining two terms can be combined into:
    \begin{align*}
    	(\ref{s9})&=\frac{(-1)^{m}}{(m+1)!}\sum_{\sigma \in S_{2m+2}}\mathrm{Tr}\left(T_0\delta_{\sigma(1)}(T_1) \delta_{\sigma(2)}(T_2) \cdots \delta_{\sigma(2m+2)}(T_{2m+2}) \right)\\
    	&=-\omega_{2m+2}(T_0,\cdots,T_{2m+2}).
    \end{align*}
    
    Finally, by the expression of $S$ (\cite{Connes}; Page 337), equality (\ref{formula1}) implies that
    \begin{align*}
    	S(\sum\omega_{2m})&=2\pi i(2m+1)(2m+2)bB^{-1}(\sum\omega_{2m})\\
    	&=2\pi i(m+1)^2\omega_{2m+2},
    \end{align*}  
    then by Proposition 14 of \cite{Connes}, we obtain
    \begin{align*}
    	\langle [T],[(2\pi i)^{m}m!(\sum\omega_{2m})] \rangle=\langle [T],[(2\pi i)^{m}{m}!(S\sum\omega_{2m})] \rangle =(m+1)\langle [T],[(2\pi i)^{m+1}{(m+1)}!\omega_{2m+2}] \rangle.
    \end{align*}
    By induction hypothesis,
    \begin{align*}
    	\langle [T],[(2\pi i)^{m}m!(\sum\omega_{2m})] \rangle=(m+1)\mathrm{Tr}(T),
    \end{align*}
    followed the conclusion for $m+1$.

	\end{proof}

\end{document}